\providecommand{\U}[1]{\protect\rule{.1in}{.1in}}
\newtheorem{theorem}{Theorem}
\newtheorem{definition}[theorem]{Definition}
\newtheorem{lemma}[theorem]{Lemma}
\newtheorem{proposition}[theorem]{Proposition}
\newtheorem{remark}[theorem]{Remark}
\newenvironment{proof}[1][Proof]{\noindent\textbf{#1.} }{\ \rule{0.5em}{0.5em}}
\begin{document}

\title{\textbf{ The major index }$\left(  \mathsf{maj}\right)  $\textbf{ and its
Sch\"{u}tzenberger dual}}
\author{Oleg Ogievetsky$^{\sharp,\flat,\dag}$ \& Senya Shlosman$^{\natural
,\sharp,\flat,\ddag}$\\$^{\natural}$Krichever Center for Advance Studies, Moscow, Russia;\\$^{\sharp}$Aix Marseille Univ, Universite de Toulon, \\CNRS, CPT, Marseille, France;\\$^{\flat}$Inst. of the Information Transmission Problems, \\RAS, Moscow, Russia;\\$^{\dag}$Lebedev Physical Institute, Moscow, Russia,\\$^{\ddag}$BIMSA, Beijing, China\\Oleg.Ogievetsky@gmail.com, shlosman@gmail.com}
\maketitle

\begin{abstract}
We construct the independent particle representation for the Semistandard
Young Tableaux (SsYT) of skew shape $\lambda/\mu.$ The partition function of
this particle system gives the generating function of the SsYT of skew shape
$\lambda/\mu.$ Thus we obtain a bijective proof of the Stanley formula for the
SsYT generating function.

To do this we define for every SsYT $T$ its plinth, $\mathsf{p}\left(
T\right)  ,$ which is a SsYT of the same shape $\lambda/\mu.$ The set of
plinths is finite. Our bijection associates to every SsYT $T$ a pair $\left(
\mathsf{p}\left(  T\right)  ,Y\left(  T-\mathsf{p}\left(  T\right)  \right)
\right)  ,$ where $Y\left(  T-\mathsf{p}\left(  T\right)  \right)  $ is the
reading Young diagram of the SsYT $\left(  T-\mathsf{p}\left(  T\right)
\right)  $. \newline In particular, every Standard Young Tableau (SYT) $P$ has
its plinth, $\mathsf{p}\left(  P\right)  $. The two statistics of SYT-s -- the
volume $\left\vert \mathsf{p}\left(  P\right)  \right\vert $ and
$\mathsf{maj}\left(  P\right)  $ -- are related via the Sch\"{u}tzenberger
involution $Sch:$%
\[
\left\vert \mathsf{p}\left(  P\right)  \right\vert =\mathsf{maj}\left(
Sch\left(  P\right)  \right)  .
\]

\end{abstract}

\section{Introduction}

We first remind the reader about the description of the integer partitions as
a particle system. The partition function of this particle system gives the
generating function of the integer partitions. We then describe our main
result, which provides analogous description for the Semistandard Young Tableaux.

\subsection{Independent variables representation}

Consider the set of partitions $\mathcal{Y}_{n},$ which consists of all Young
diagrams $Y$ with $n$ columns:%
\[
Y=\left\{  \left(  y_{1},...,y_{n}\right)  :y_{i}\in\mathbb{Z}^{1},0\leq
y_{1}\leq...\leq y_{n}\right\}  .
\]
We put $\left\vert Y\right\vert =y_{1}+...+y_{n},$ and if $\left\vert
Y\right\vert =N$ then we say that $Y$ is a partition of the integer $N$ into
$n$ parts. We want to get an expression for the generating function
\[
G_{n}\left(  q\right)  =\sum_{Y\in\mathcal{Y}_{n}}q^{\left\vert Y\right\vert }%
\]
(which is well known, of course). The easiest way to do it is via the
reasization that $G_{n}\left(  q\right)  $ is the partition function of the
following simple particle system:

Consider the set of $n$ boxes $\mathsf{b}_{1},...,\mathsf{b}_{n},$ each
$\mathsf{b}_{k}$ containing some number $\xi_{k}$ of particles, $\xi_{k}%
\geq0,$ $k=1,...,n.$ The particles are not interacting; the only non-trivial
feature of the system is that the activities of particles depend on the box;
in the box $\mathsf{b}_{k}$ the corresponding activity is $z_{k}.$ Thus the
grand canonical partition function of this ensemble is%
\[
Z\left(  z_{1},...,z_{n}\right)  =\prod_{k=1}^{n}\left(  1+z_{k}+z_{k}%
^{2}+...\right)  =\prod_{k=1}^{n}\left(  \frac{1}{1-z_{k}}\right)  .
\]
The important observation is that if we take $z_{k}=q^{k},$ then
\begin{equation}
Z\left(  q,q^{2},...,q^{n}\right)  =G_{n}\left(  q\right)  . \label{29}%
\end{equation}
To see this consider the following bijection $B_{0}$ between the set $\Xi
_{n}=\left\{  \xi_{1},...,\xi_{n}\right\}  $ of particle configurations and
the set of partitions $\mathcal{Y}_{n}:$ the image $Y=\left\{  y_{1}%
,...,y_{n}\right\}  =B_{0}\left\{  \xi_{1},...,\xi_{n}\right\}  $ of the
configuration $\left\{  \xi_{1},...,\xi_{n}\right\}  $ is defined by:%
\begin{align*}
y_{1}  &  =\xi_{n},\\
y_{2}-y_{1}  &  =\xi_{n-1},\\
&  ...\\
y_{n}-y_{n-1}  &  =\xi_{1.}%
\end{align*}
Since $y_{1}+y_{2}+...+y_{n}=\xi_{1}+2\xi_{2}+...+n\xi_{n},$ we have
\[
q^{y_{1}+y_{2}+...+y_{n}}=q^{\xi_{1}}\left(  q^{2}\right)  ^{\xi_{2}%
}...\left(  q^{n}\right)  ^{\xi_{n}},
\]
and our claim $\left(  \ref{29}\right)  $ follows. Thus we recover the
well-known answer:
\[
G_{n}\left(  q\right)  =\sum_{Y\in\mathcal{Y}_{n}}q^{\left\vert Y\right\vert
}=\prod_{k=1}^{n}\left(  \frac{1}{1-q^{k}}\right)  .
\]

As a result, the study of the statistics of the partitions with $n$ parts is
possible via the study of the independent (though differently distributed)
random variables $\xi_{1},...,\xi_{n}.$ This representation of the partitions
via independent random particles is very helpful, as is demonstrated for
example in \cite{DVZ} (from where we have borrowed the title of this subsection).

\subsection{Independent variables representation, 2D case}

The above partitions $Y$ are 1D integer arrays. One of the main goal of this
work is to get a similar independent variables representation for 2D arrays of
integers, such as plane partitions or semistandard Young tableaux.

For example, let $T=\left\{  t_{i,j}:1\leq i\leq n,1\leq j\leq m\right\}  $ be
a rectangular array of non-negative integers. It is called a reverse plane
partition if $t_{i,j}\leq t_{i+1,j}$ for all $1\leq i\leq n-1,1\leq j\leq m$
and $t_{i,j}\leq t_{i,j+1}$ for all $1\leq i\leq n,1\leq j\leq m-1.$ Let us
denote by $\mathcal{P}_{n,m}^{RPP}$ the set of all reverse plane partitions
$T$ sitting on the rectangle $n\times m.$ As before, we are interested in the
generating function
\begin{equation}
G_{n,m}^{RPP}\left(  q\right)  =\sum_{T\in\mathcal{P}_{n,m}^{RPP}%
}q^{\left\vert T\right\vert }, \label{79}%
\end{equation}
where $\left\vert T\right\vert =\sum t_{i,j},$ and in the representation of
this ensemble as a particle system the generating function $\left(
\ref{79}\right)  $ will be the partition function of this particle system.

Clearly, there exists the \textit{reading map} $r:\mathcal{P}_{n,m}%
^{RPP}\rightarrow\mathcal{Y}_{nm},$ which to every 2D partition $T=\left\{
t_{i,j}\right\}  $ corresponds the (1D) partition $Y=r\left(  T\right)  ,$
composed by the $nm$ integers $t_{i,j}$ listed in non-decreasing order.
However, the corresponding collection $\xi_{1},...,\xi_{nm}$ of particles is
not sufficient for describing the ensemble $\mathcal{P}_{n,m}^{RPP},$ since
the map $r$ is not a bijection, of course, but is a several-to-one map. It
turns out that one needs to add one extra variable, $\pi_{n,m},$ independent
of $\xi_{1},...,\xi_{nm},$ to get the desired representation. This varialbe
$\pi_{n,m}$ is called a \textit{pedestal}. Pedestals were introduced in
\cite{S}. The pedestal $\pi_{n,m}\left(  T\right)  $ is again a reversed plane
partition, $\pi_{n,m}\left(  T\right)  \in\mathcal{P}_{n,m}^{RPP},$ but the
range of the map $\pi_{n,m}:\mathcal{P}_{n,m}^{RPP}\rightarrow\mathcal{P}%
_{n,m}^{RPP}$ is finite. This finite set of all pedestals is denoted by
$\Pi_{n,m}\subset\mathcal{P}_{n,m}^{RPP}.$ The extra pedestal variable is
enough for our purposes: there exists a bijection $b:\mathcal{P}_{n,m}%
^{RPP}\rightarrow\Pi_{n,m}\times\mathcal{Y}_{nm}$ which allows one to write
down the generating function $\left(  \ref{79}\right)  $ as a partition
function of the extended particle system $\left\{  \pi_{n,m},\xi_{1}%
,...,\xi_{nm}\right\}  ,$ so in particular we have%
\[
G_{n,m}^{RPP}\left(  q\right)  =\sum_{T\in\mathcal{P}_{n,m}^{RPP}%
}q^{\left\vert T\right\vert }=\left(  \sum_{\pi\in\Pi_{n,m}}q^{\left\vert
\pi\right\vert }\right)  \prod_{k=1}^{nm}\left(  \frac{1}{1-q^{k}}\right)  .
\]
We are omitting here some important details, which can be found in \cite{S}.
The polinomial $\sum_{\pi\in\Pi_{n,m}}q^{\left\vert \pi\right\vert }$ is what
is called pedestal polinomial in \cite{OS}. For more information on pedestals
and pedestal matrices see \cite{KKOPSS}.

In what follows, we will develop a similar program for the case of
semistandart Young tableaux. We will find the particle system representation
for them, thus getting the generating function of the semistandart Young
tableaux via computing the partition function of this extended particle
system. An extra particle which appear there is called \textit{plinth}. We
then study the relation between the plinths and the known index $\mathsf{maj}%
\left(  \ast\right)  $ of the standard Young tableaux.

\subsection{Particle system for the semistandard Young tableaux}

We will consider a more general situation than above.

Let $\lambda$ be a Young diagram, having $\ell\left(  \lambda\right)  $ rows,
of lengths $\lambda_{1}\geq\lambda_{2}\geq...\lambda_{\ell\left(
\lambda\right)  }$. Let $\mu\subset\lambda$ be a smaller Young diagram. The
skew diagram, $\lambda/\mu,$ is the complement, $\lambda\setminus\mu$. The
cells $\left(  ij\right)  $ of $\lambda/\mu$ are pairs of integers,
satisfying:  $1\leq i\leq\ell\left(  \lambda\right)  ,\ \mu_{i}<j\leq
\lambda_{i}.$ A Semistandard Young tableau (SsYT) $T$ of shape $\lambda/\mu,$
is an array $T=\left(  T_{ij}\right)  $ of \textbf{non-negative} integers of
shape $\lambda/\mu$, which are weakly increasing in every row (i.e. in $j$)
and strictly increasing in every column of $\lambda/\mu$ (i.e. down). Denote
the (infinite) set of all SsYT of shape $\lambda/\mu$ by $\mathcal{P}%
_{\lambda/\mu}^{Ss}.$ A Standard Young tableau (SYT) $P$ of shape $\lambda
/\mu$ is an array $P=\left(  P_{ij}\right)  $ of \textbf{positive} integers
$\left\{  1,2,...,n=\left\vert \lambda/\mu\right\vert \right\}  $ of shape
$\lambda/\mu$ , each integer taken once, which is strictly increasing in every
row and in every column. Denote the (finite) set of all SYT of shape
$\lambda/\mu$ by $SYT\left(  \lambda/\mu\right)  .$

The generating function $G_{\lambda/\mu}^{Ss}\left(  q\right)  $ of the
Semistandard Young Tableaux (SsYT) of shape $\lambda/\mu\ $is a specialization
of the well-known \textit{Schur function} $s_{\lambda/\mu}$:
\[
G_{\lambda/\mu}^{Ss}\left(  q\right)  =\sum_{T\in\mathcal{P}_{\lambda/\mu
}^{Ss}}q^{vol\left(  T\right)  }=s_{\lambda/\mu}\left(  1,q,q^{2},...\right)
.
\]

We will construct a  particle system $\left\{  \mathsf{p},\xi_{1},...,\xi
_{n}\right\}  $ which has $G_{\lambda/\mu}^{Ss}\left(  q\right)  $ as its
partition function. The extra variable $\mathsf{p}$ is called a
\textit{plinth, }it takes values in some finite subset $\mathcal{N}%
_{\lambda/\mu}\subset\mathcal{P}_{\lambda/\mu}^{Ss}.$ We will also exhibit a
bijection, $B^{Ss}:\mathcal{P}_{\lambda/\mu}^{Ss}\rightarrow\mathcal{N}%
_{\lambda/\mu}\times\mathcal{Y}_{n},$ respecting the volumes, so that we have
\begin{equation}
G_{\lambda/\mu}^{Ss}\left(  q\right)  =\left(  \sum_{\mathsf{p}\in
\mathcal{N}_{\lambda/\mu}}q^{vol\left(  \mathsf{p}\right)  }\right)
\prod_{k=1}^{n}\left(  \frac{1}{1-q^{k}}\right)  .\label{26}%
\end{equation}
The construction of the bijection $B^{Ss}$ is one of our main results. 

In fact, a (different) formula for the generating function $G_{\lambda/\mu
}^{Ss}\left(  q\right)  $ is known. It expresses the generating function of
the SsYT of shape $\lambda/\mu$ via the major index $\mathsf{maj}(\ast)$
(defined in $\left(  \ref{44}\right)  $ below) of Standard Young Tableaux
(SYT) of the same shape $\lambda/\mu$:

\begin{proposition}
(\cite{St}, Proposition 7.19.11) Let $\left\vert \lambda/\mu\right\vert =n.$
Then%
\begin{equation}
G_{\lambda/\mu}^{Ss}\left(  q\right)  =\left(  \sum_{P\in SYT\left(
\lambda/\mu\right)  }q^{\mathsf{maj}\left(  P\right)  }\right)  \prod
_{k=1}^{n}\left(  \frac{1}{1-q^{k}}\right)  \label{21}%
\end{equation}
where $P$ ranges over all SYT of shape $\lambda/\mu.$
\end{proposition}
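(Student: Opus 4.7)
The plan is to invoke Stanley's theory of $(P,\omega)$-partitions, which is the standard machinery for turning Schur-type generating functions into $\mathsf{maj}$-generating functions over SYT. First I would view the skew diagram $\lambda/\mu$ as a finite poset $P$ whose elements are its cells $(i,j)$, with covering relations $(i,j)\lessdot(i+1,j)$ and $(i,j)\lessdot(i,j+1)$. I equip $P$ with a labeling $\omega:P\to\{1,\dots,n\}$ that is strictly decreasing down columns and strictly increasing along rows (for instance, fill columns from top to bottom, scanning columns left to right). With this choice, a SsYT of shape $\lambda/\mu$ is exactly a $(P,\omega)$-partition: a map $\sigma:P\to\mathbb{Z}_{\geq 0}$ that is weakly increasing on $\omega$-ascending covers and strictly increasing on $\omega$-descending covers.

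Next I would apply Stanley's fundamental theorem for $(P,\omega)$-partitions, which decomposes the generating function over linear extensions:
$$G_{\lambda/\mu}^{Ss}(q)=\sum_{\sigma}q^{|\sigma|}=\sum_{\pi\in\mathcal{L}(P,\omega)}\frac{q^{\mathsf{maj}(\pi)}}{\prod_{k=1}^{n}(1-q^k)},$$
where for a linear extension $\pi=(\pi_1,\dots,\pi_n)$ of $P$ the statistic $\mathsf{maj}(\pi)$ is the sum of those indices $i$ for which $\omega(\pi_i)>\omega(\pi_{i+1})$.

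The crucial combinatorial step is identifying $\mathcal{L}(P,\omega)$ with $SYT(\lambda/\mu)$ in a descent-preserving way. A linear extension of the skew-shape poset is exactly a bijective filling of $\lambda/\mu$ by $1,\dots,n$ that increases along rows and down columns, i.e. a SYT $P_\pi$; the correspondence $\pi\leftrightarrow P_\pi$ is immediate. Under the chosen $\omega$, an $\omega$-descent at position $i$ occurs precisely when $i+1$ is placed in a row strictly below $i$ in $P_\pi$, which is the definition of a descent of the SYT entering $\mathsf{maj}(P_\pi)$ via (\ref{44}). Hence $\mathsf{maj}(\pi)=\mathsf{maj}(P_\pi)$ and (\ref{21}) follows at once.

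The main obstacle is precisely this last verification: one must choose $\omega$ so that the row/column (weak/strict) inequality pattern of SsYT matches the $(P,\omega)$-partition axioms \emph{and} so that $\omega$-descents of linear extensions coincide with the row-below descents used to define $\mathsf{maj}$ on SYT. Both conditions are sensitive to the labeling convention and, for skew shapes, must be checked cell by cell; once the right $\omega$ is pinned down, everything else is a direct application of Stanley's theorem.
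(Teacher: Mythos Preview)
Your approach is correct and is in fact the proof Stanley gives of Proposition~7.19.11 in \cite{St}; the paper simply \emph{cites} that result rather than reproving it. One small slip: your sample labeling ``fill columns from top to bottom, scanning columns left to right'' is increasing down columns, not decreasing, so it does not meet your own requirement. A labeling that does work (and makes both verifications immediate, even for skew shapes) is to read the rows of $\lambda/\mu$ from bottom to top, left to right within each row: then every cell in a lower row carries a smaller $\omega$-label than every cell in a higher row, so $\omega$-descents of a linear extension coincide exactly with the ``$k{+}1$ in a lower row than $k$'' descents of the corresponding SYT, and row/column covers are $\omega$-ascending/descending as needed.

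As for comparison: the paper's own route to~\eqref{21} is not the $(P,\omega)$-partition argument but the combination of their plinth bijection $B^{Ss}$ (Theorem~3, yielding~\eqref{26}) with the identity $\mathrm{vol}(\mathsf{p}(Q))=\mathsf{maj}(\widetilde{Sch}(Q))$ of~\eqref{71}, using that $\widetilde{Sch}$ is an involution on $SYT(\lambda/\mu)$. Structurally the plinth bijection is the same ``decompose SsYT by their associated linear extension and peel off the minimal element'' idea you are invoking, but with a different tie-breaking rule (their $Q^T$, defined by~\eqref{67}--\eqref{63}); this produces the statistic $\mathrm{vol}(\pi^Q)=n\,|\mathsf{Des}(Q)|-\mathsf{maj}(Q)$ (see~\eqref{42}) rather than $\mathsf{maj}(Q)$ itself, and the Sch\"{u}tzenberger involution is then needed to match the two. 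Your $(P,\omega)$ argument reaches $\mathsf{maj}$ directly by choosing the labeling $\omega$ up front, so it is shorter; the paper's detour, however, is what exhibits the new statistic $\mathrm{vol}(\mathsf{p}(\cdot))$ and its relation to $\mathsf{maj}$ via $Sch$, which is the paper's actual point.
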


Moreover, as we explain below, there is a natural one-to-one correspondence
between the plinths $\mathsf{p}\in\mathcal{N}_{\lambda/\mu}$ and the SYT $P\in
SYT\left(  \lambda/\mu\right)  ,$ so every plinth $\mathsf{p}$ can be uniquely
written as $\mathsf{p}\left(  P\right)  $ for an appropriate SYT  $P.$ Looking
again on the formulas above, one sees that
\begin{equation}
\sum_{\mathsf{p}\in\mathcal{N}_{\lambda/\mu}}q^{vol\left(  \mathsf{p}\right)
}=\sum_{P\in\mathsf{S}YT\left(  \lambda/\mu\right)  }q^{vol\left(
\mathsf{p}\left(  P\right)  \right)  }=\sum_{P\in SYT\left(  \lambda
/\mu\right)  }q^{\mathsf{maj}\left(  P\right)  }.\label{23}%
\end{equation}
Yet, the individual $P$-terms in $\left(  \ref{23}\right)  $ are not
coinciding, i.e. the two statistics on SYT-s -- $vol\left(  \mathsf{p}\left(
P\right)  \right)  $ and $\mathsf{maj}\left(  P\right)  $ -- are different! 

This puzzle is resolved by help of the technique developed by
Sch\"{u}tzenberger: it turns out that the Sch\"{u}tzenberger involution $Sch$
of a SYT $P$ has the plinth with the volume equal to $\mathsf{maj}\left(
P\right)  :$%
\begin{equation}
vol\left(  \mathsf{p}\left(  Sch\left(  P\right)  \right)  \right)
=\mathsf{maj}\left(  P\right)  .\label{71}%
\end{equation}
Of course, the Sch\"{u}tzenberger involution, see \cite{Sch}, is defined
initially only for SYT $P$ of the straight shape $\lambda,$ and not for the
skew shapes $\lambda/\mu.$ But its extension to skew shapes is possible, as we
will explain below, due to the beautiful results of Haiman, \cite{H}.

Note that our derivation of the formula $\left(  \ref{26}\right)  $ gives a
bijective proof of the Stanley formula $\left(  \ref{21}\right)  .$

We finish the introduction by an example for $\left(  \ref{71}\right)  $: we
take $\lambda=%
\begin{tabular}
[c]{|l|l|l|}\hline
$\ast$ & $\ast$ & $\ast$\\\hline
$\ast$ & $\ast$ & \\\hline
\end{tabular}
\ \ \ \ ,\ \mu=\varnothing,$ and $P=%
\begin{tabular}
[c]{|l|l|l|}\hline
$1$ & $2$ & $3$\\\hline
$4$ & $5$ & \\\hline
\end{tabular}
\ .$ It turns out that $\mathsf{p}\left(  P\right)  =%
\begin{tabular}
[c]{|l|l|l|}\hline
$0$ & $0$ & $0$\\\hline
$1$ & $1$ & \\\hline
\end{tabular}
,$ while $Sch\left(  P\right)  =%
\begin{tabular}
[c]{|l|l|l|}\hline
$1$ & $2$ & $5$\\\hline
$3$ & $4$ & \\\hline
\end{tabular}
.$ We have%
\[
\left\vert \mathsf{p}\left(  P\right)  \right\vert =2=\mathsf{maj}\left(
\begin{tabular}
[c]{|l|l|l|}\hline
$1$ & $2$ & $5$\\\hline
$3$ & $4$ & \\\hline
\end{tabular}
\ \ \ \ \right)  ,
\]
as claimed (since the cell $2$ is the only \textit{descent} of the SYT $%
\begin{tabular}
[c]{|l|l|l|}\hline
$1$ & $2$ & $5$\\\hline
$3$ & $4$ & \\\hline
\end{tabular}
\ \ \ .$)

The rest of the paper is organized as follows. We define plinths in the next
Section. In the Sections 3 and 4 we remind the reader about the jeu de taquin
and the Sch\"{u}tzenberger involution. Since these two concepts are well
known, we restrict ourselves to just the examples of both, referring the
reader to the Appendix 1 of the Stanley book \cite{St} (written by S. Fomin),
for more information. In the Section 5 we prove $\left(  \ref{71}\right)  $
for Standard Young Tableaux of straight shape. Section 6 contains our
extension of the Sch\"{u}tzenberger involution to the Standard Young Tableaux
of skew shapes. We prove the relation $\left(  \ref{71}\right)  $ for Standard
Young Tableaux of skew shape in the last Section 7.

\section{Plinths}

Let $\lambda/\mu$ be a skew shape with $n$ cells. We denote by $ij$ or
$\left(  ij\right)  $ its cells. A Semistandard Young Tableau (SsYT) of shape
$\lambda/\mu$ is an array $T=\left(  T_{ij}\right)  $ of non-negative integers
of shape $\lambda/\mu$ (i.e., $1\leq i\leq\ell\left(  \lambda\right)  ,$
$\mu_{i}<j\leq\lambda_{i}$), which are weakly increasing in every row and
strictly increasing in every column. We think that $i$ increases to the right,
while $j$ increases down. We denote by $\left\vert T\right\vert $ the sum of
all $T_{ij}.$

Writing down the integers $T_{ij}$ in non-decreasing order, $\left(
y_{1},...,y_{n}\right)  $ provides a partition of the number $\left\vert
T\right\vert $ (with possibly zero parts). We denote the corresponding Young
diagram -- the reading of $T$ -- by $Y\left(  T\right)  .$

Let $Q$ be a Standard Young Tableau (SYT) of shape $\lambda/\mu.$ For every
$k=1,...,n$ there is a unique cell $\left(  ij\right)  $ for which $Q_{ij}=k.$
In that case we will say that $\left(  ij\right)  =Q\left(  k\right)  .$ The
value $k,$ $k=1,...,n-1$ (and the cell $Q\left(  k\right)  $) is called a
descent of $Q,$ if
\begin{equation}
Q\left(  k\right)  =\left(  ij\right)  ,\ Q\left(  k+1\right)  =\left(
i^{\prime}j^{\prime}\right)  \text{ and }j^{\prime}>j. \label{48}%
\end{equation}
In words, the entry $k$ is a descent iff the row of the entry $k+1$ is below
the row of $k.$ We denote by $D\left(  Q\right)  \subset\lambda/\mu$ the set
of all descents cells of $Q$. Let
\begin{equation}
\mathsf{Des}\left(  Q\right)  =\left\{  i_{1},...,i_{l}\right\}  \label{49}%
\end{equation}
be the contents of descent cells of $Q.$ By definition,%
\begin{equation}
\mathsf{maj}\left(  Q\right)  =\sum_{k:Q\left(  k\right)  \in D\left(
Q\right)  }k\equiv\sum_{k\in\mathsf{Des}\left(  Q\right)  }k. \label{44}%
\end{equation}

We say that the SsYT $T$ agrees with SYT $Q,$ if

\begin{itemize}
\item for all $k$ we have%
\begin{equation}
T_{Q\left(  k\right)  }\leq T_{Q\left(  k+1\right)  }; \label{61}%
\end{equation}

\item if $k$ is a descent of $Q,$ then
\begin{equation}
T_{Q\left(  k\right)  }<T_{Q\left(  k+1\right)  }. \label{62}%
\end{equation}

\end{itemize}

For every SYT $Q$ of shape $\lambda/\mu$ we denote by $\mathcal{P}_{Q}%
^{Ss}\subset\mathcal{P}_{\lambda/\mu}^{Ss}$ the set of all SsYT $T$ agreeing
with $Q.$

\begin{proposition}
The sets $\mathcal{P}_{Q}^{Ss}$ do not intersect, and%

\[
\bigcup_{Q}\mathcal{P}_{Q}^{Ss}=\mathcal{P}_{\lambda/\mu}^{Ss}.
\]

\end{proposition}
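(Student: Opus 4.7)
The plan is to associate to every SsYT $T\in\mathcal{P}_{\lambda/\mu}^{Ss}$ a canonical SYT $Q=Q(T)$ with which $T$ agrees, and to show it is the only such SYT; existence of $Q(T)$ will yield the covering $\bigcup_{Q}\mathcal{P}_{Q}^{Ss}=\mathcal{P}_{\lambda/\mu}^{Ss}$, and uniqueness will yield the pairwise disjointness of the sets $\mathcal{P}_{Q}^{Ss}$.

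I would construct $Q(T)$ by enumerating the $n$ cells of $\lambda/\mu$ as $Q(1),Q(2),\ldots,Q(n)$ so that the sequence $T_{Q(1)},T_{Q(2)},\ldots,T_{Q(n)}$ is weakly increasing, breaking ties within each level set $S_{v}=\{c\in\lambda/\mu:T_{c}=v\}$ by reading the rows of $S_{v}$ from the bottom of the diagram to the top, and left-to-right within each row. The key structural fact behind this prescription is that within each $S_{v}$ no two cells share a column, and whenever two cells of $S_{v}$ lie in different rows the lower one (in the diagram) sits strictly to the left of the higher one. To establish it one looks at the would-be cell in the column of the higher $S_{v}$-cell and the row of the lower $S_{v}$-cell: if that cell lies in $\lambda/\mu$, the row-weak and column-strict conditions on $T$ immediately contradict each other; if it does not, the resulting inequality propagates along the partition $\mu$ and contradicts the very existence of the higher $S_{v}$-cell in $\lambda/\mu$.

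With this in hand, three short verifications complete the proof. First, $Q(T)$ is a valid SYT: strict increase along each row follows because the $T$-values are weakly increasing along that row and equal $T$-values are read left-to-right, while strict increase down each column is inherited directly from the column-strict condition on $T$. Second, $T$ agrees with $Q(T)$: condition (\ref{61}) is built into the construction, and for (\ref{62}) observe that the tie-breaking reads consecutive cells of the same $S_{v}$ with the row of $Q(k+1)$ weakly above that of $Q(k)$, which is the opposite of a descent, so a descent of $Q$ must be accompanied by a strict jump in $T$-values. Third, uniqueness: if $T$ also agrees with some $Q'$, then (\ref{61}) forces $Q'$ to process the level sets $S_{v}$ in the same order as $Q$, and (\ref{62}) together with the row-strictness of $Q'$ pins down the intra-block order uniquely, so $Q'=Q(T)$. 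The only real obstacle is the structural lemma about the geometry of each $S_{v}$; once it is in place everything else is bookkeeping, with the descent condition (\ref{62}) supplying exactly the rigidity that makes the tie-breaking canonical.
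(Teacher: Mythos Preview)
Your proposal is correct and follows essentially the same approach as the paper: both construct the unique $Q^{T}$ by ordering cells first by $T$-value and then resolving ties inside each level set, and both derive uniqueness from the observation that the ``wrong'' choice on a tied pair would create a descent with equal $T$-values, violating~(\ref{62}). Your bottom-row-first, left-to-right tie-break coincides with the paper's ordering by column index, since your staircase lemma for $S_v$ (which you prove in more detail than the paper) makes the two descriptions equivalent.
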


\begin{proof}
Let $T$ be a SsYT. We are going to define the linear order $Q^{T}$ on
$\lambda/\mu,$ for which $T\in\mathcal{P}_{Q^{T}}^{Ss}.$

If
\begin{equation}
T_{ij}>T_{i^{\prime}j^{\prime}},\ \text{then we define }Q_{\left(  ij\right)
}^{T}>Q_{\left(  i^{\prime}j^{\prime}\right)  }^{T}, \label{67}%
\end{equation}
in order to satisfy $\left(  \ref{61}\right)  .$

Consider now the collection $I\left(  p\right)  $ of all cells $\left(
i,j\right)  $ for which the values $T_{ij}=p,\ p\geq0.$ Note that if $\left(
i^{\prime}j^{\prime}\right)  ,\left(  i,j\right)  \in I\left(  p\right)  $ are
two different cells, then $i^{\prime}\neq i,$ since otherwise the values
$T_{ij},T_{i^{\prime}j^{\prime}}$ have to be different. So the ordering
\begin{equation}
Q_{\left(  i^{\prime}j^{\prime}\right)  }^{T}>Q_{\left(  ij\right)  }%
^{T}\ \text{on }I\left(  p\right)  \ \text{iff }i^{\prime}>i \label{63}%
\end{equation}
is well defined, and thus the definition of the order $Q^{T}$ is competed.
From $\left(  \ref{67}\right)  ,$ $\left(  \ref{63}\right)  $ it follows that
$Q^{T}$ is SYT.

To see that $\left(  \ref{63}\right)  $ is the only possible way to define
$Q^{T}$ we have to consider the pairs of cells $\left(  i,j\right)  ,\left(
i^{\prime}j^{\prime}\right)  \in I\left(  p\right)  $ with $i^{\prime}>i$ and
$j^{\prime}>j.$ But for them the choice $\left(  \ref{63}\right)  $ is
mandatory, due to the condition $\left(  \ref{62}\right)  $ -- since the
opposite choice makes the cell $\left(  i,j\right)  $ a descent of $Q^{T}.$
\end{proof}

For two SsYT $T$ and $T^{\prime}$ we say that $T^{\prime}\succeq T$ iff
$T_{ij}^{\prime}\geq T_{ij}$ for each cell $ij\in\lambda/\mu.$ It is easy to
see that there exists a \textit{minimal} SsYT $\mathsf{t}_{\lambda/\mu},$ for
which we have $T\succeq\mathsf{t}_{\lambda/\mu}$ for all $T.$

For every $Q$ we denote by $\pi^{Q}\in\mathcal{P}_{Q}^{Ss}$ the smallest SsYT
in $\mathcal{P}_{Q}^{Ss}$ (in the $\succeq$ sense). We call $\pi^{Q}$ the
\textit{plinth} of $Q:$%
\[
\mathsf{p}\left(  Q\right)  =\pi^{Q}.
\]
We also say that if $T\in\mathcal{P}_{Q}^{Ss},$ then $\pi^{Q}$ is the plinth
of $T.$ Note that for different SYT $Q$ their plinths $\pi^{Q}$ are different,
due to the Proposition above. As an example, the above mentioned minimal SsYT
$\mathsf{t}_{\lambda/\mu}$ is a plinth -- namely, it is the plinth of the row
order $Q^{\operatorname{row}}$ on $\lambda/\mu.$ For the case $\lambda=\left(
4,4,4,3\right)  ,$ $\mu=\left(  2,1,1\right)  $ the row order is

$Q^{\operatorname{row}}=%
\begin{tabular}
[c]{|l|l|l|l|}\hline
&  & 1 & 2\\\hline
& 3 & 4 & 5\\\hline
& 6 & 7 & 8\\\hline
9 & 10 & 11 & \\\hline
\end{tabular}
\ $ and $\mathsf{t}_{\lambda/\mu}=\mathsf{p}\left(  Q^{\operatorname{row}%
}\right)  =%
\begin{tabular}
[c]{|l|l|l|l|}\hline
&  & 0 & 0\\\hline
& 0 & 1 & 1\\\hline
& 1 & 2 & 2\\\hline
0 & 2 & 3 & \\\hline
\end{tabular}
\ .$

The set of all plinths $\pi^{Q}\in\mathcal{P}_{\lambda/\mu}^{Ss}$ is denoted
by $\mathcal{N}_{\lambda/\mu}.$

\begin{theorem}
Let $\lambda/\mu$ be a skew Young diagram. For $T\in\mathcal{P}_{\lambda/\mu
}^{Ss}$ the correspondence
\begin{equation}
B^{Ss}:T\rightarrow\left(  \pi^{Q^{T}},Y\left(  T-\pi^{Q^{T}}\right)  \right)
\label{31}%
\end{equation}
is a bijection between $\mathcal{P}_{\lambda/\mu}^{Ss}$ and $\mathcal{N}%
_{\lambda/\mu}\times\mathcal{Y}_{\left\vert Y\right\vert },$ preserving the
volume. Here

$T-\pi^{Q^{T}}$ is the reverse plane partition of shape $\lambda/\mu$ (not
necessarily a SsYT), given by%
\[
\left(  T-\pi^{Q^{T}}\right)  _{ij}=T_{ij}-\left(  \pi^{Q^{T}}\right)  _{ij},
\]

$\mathcal{Y}_{\left\vert \lambda/\mu\right\vert }$ is the set of all Young
diagrams with $\left\vert \lambda/\mu\right\vert $ (non-negative) parts.

The inverse map $\bar{B}^{Ss}$ corresponds to the pair
\[
\left(  \pi^{Q},Y=\left\{  y_{1}\leq y_{2}\leq...\leq y_{\left\vert
\lambda/\mu\right\vert }\right\}  \right)  \in\mathcal{N}_{\lambda/\mu}%
\times\mathcal{Y}_{\left\vert \lambda/\mu\right\vert }%
\]
the SsYT $T,$ which is a sum of two functions on the cells of $\lambda/\mu:$
\begin{equation}
T=\pi^{Q}+f_{Y} \label{32}%
\end{equation}
-- the plinth $\pi_{ij}^{Q}$ and the function $\left(  f_{Y}\right)  _{ij}$,
$i,j\in\lambda/\mu,$ given by\newline%
\begin{equation}
\left(  f_{Y}\right)  _{Q\left(  k\right)  }=y_{k}. \label{33}%
\end{equation}
In particular, the relation $\left(  \ref{26}\right)  $ holds.
\end{theorem}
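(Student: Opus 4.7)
The strategy is to turn the decomposition $\mathcal{P}_{\lambda/\mu}^{Ss}=\bigsqcup_Q \mathcal{P}_Q^{Ss}$ from the previous Proposition into an explicit bijection by splitting each $T$ into its plinth and a residue that already lies in $\mathcal{Y}_n$. The essential preliminary step is an explicit formula for the plinth: iterating (\ref{61})--(\ref{62}) from $\pi^Q_{Q(1)}=0$ forces
\[
\pi^Q_{Q(k)} = \#\{\,j\in\mathsf{Des}(Q):j<k\,\},
\]
and one checks directly that this array is a SsYT agreeing with $Q$. Strict column monotonicity on $\lambda/\mu$ holds because between two vertically adjacent cells the linear $Q$-order must traverse at least one descent (the net row displacement is $+1$, while non-descent steps never increase the row index), so $\pi^Q$ actually increases there.

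To verify that $B^{Ss}$ is well defined, I would take $T\in\mathcal{P}_{\lambda/\mu}^{Ss}$, obtain the unique $Q^T$ from the previous Proposition, and observe that $T\succeq\pi^{Q^T}$ forces $T-\pi^{Q^T}$ to have non-negative entries. The key computation is that $y_k:=(T-\pi^{Q^T})_{Q^T(k)}$ is already weakly increasing in $k$: the difference
\[
y_{k+1}-y_k = \bigl(T_{Q^T(k+1)}-T_{Q^T(k)}\bigr)-\bigl(\pi^{Q^T}_{Q^T(k+1)}-\pi^{Q^T}_{Q^T(k)}\bigr)
\]
is non-negative in both cases (whether $k$ is a descent of $Q^T$ or not), directly from (\ref{61})--(\ref{62}) together with the explicit form of the plinth. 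Hence $Y(T-\pi^{Q^T})=(y_1,\dots,y_n)\in\mathcal{Y}_n$, and $B^{Ss}$ lands in the stated product set.

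For the inverse $\bar{B}^{Ss}$, given $(\pi^Q,Y)\in\mathcal{N}_{\lambda/\mu}\times\mathcal{Y}_n$, set $T:=\pi^Q+f_Y$ as in (\ref{32})--(\ref{33}). Then
\[
T_{Q(k+1)}-T_{Q(k)} = (y_{k+1}-y_k)+\bigl(\pi^Q_{Q(k+1)}-\pi^Q_{Q(k)}\bigr)
\]
is non-negative, and is at least $1$ precisely when $k\in\mathsf{Des}(Q)$, so $T$ agrees with $Q$; the full SsYT conditions on $\lambda/\mu$ then follow from agreement exactly as in the preliminary lemma on $\pi^Q$. Uniqueness in the previous Proposition forces $Q^T=Q$ and $\pi^{Q^T}=\pi^Q$, so $B^{Ss}$ and $\bar{B}^{Ss}$ invert one another. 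Volume preservation is immediate from $|T|=|\pi^{Q^T}|+\sum_k y_k$, and summing $q^{vol(T)}$ factors as $\bigl(\sum_{\pi}q^{vol(\pi)}\bigr)\bigl(\sum_{Y\in\mathcal{Y}_n}q^{|Y|}\bigr)$, whose second factor equals $\prod_{k=1}^n (1-q^k)^{-1}$; this is (\ref{26}).

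The main subtlety I anticipate is bookkeeping between the three coexisting orders --- the skew-shape adjacency on $\lambda/\mu$, the linear $Q^T$-order, and the sorted-partition order on the residue --- with the explicit form of $\pi^Q$ serving as the glue that makes the $Q^T$-order reading of $T-\pi^{Q^T}$ coincide with its sorted reading, so that the residue genuinely lands in $\mathcal{Y}_n$.
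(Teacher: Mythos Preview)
Your proof is correct and follows essentially the same approach as the paper's: both verify that the $Q$-ordered reading of $T-\pi^{Q^T}$ is weakly increasing, that $\pi^Q+f_Y$ is a SsYT agreeing with $Q$, and that the two constructions invert one another. Your explicit formula $\pi^Q_{Q(k)}=\#\{j\in\mathsf{Des}(Q):j<k\}$ is a useful addition that the paper leaves implicit until the later computation~(\ref{42}); one small wording slip is ``at least $1$ \emph{precisely} when $k\in\mathsf{Des}(Q)$'' --- the increment can of course exceed $0$ at non-descents too, but only the $\ge 1$ bound at descents is needed for agreement, so the argument stands.
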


\begin{proof}
The volume preservation property follows immediately from the definitions
$\left(  \ref{31}\right)  ,$ $\left(  \ref{32}\right)  .$

By the definitions of the linear order $Q^{T}$ and of the plinths, all the
increments $T_{Q^{T}\left(  k+1\right)  }-T_{Q^{T}\left(  k\right)  }$ are
non-negative, and if the increment $\left(  \pi^{Q^{T}}\right)  _{Q^{T}\left(
k+1\right)  }-\left(  \pi^{Q^{T}}\right)  _{Q^{T}\left(  k\right)  }$ is
positive, then it is $1,$ and the increment $T_{Q^{T}\left(  k+1\right)
}-T_{Q^{T}\left(  k\right)  }$ is at least $1$. Therefore $T-\pi^{Q^{T}}$ is a
reverse plane partition, and the sequence $\left(  T-\pi^{Q^{T}}\right)
_{Q^{T}\left(  k\right)  }$ -- the reading diagram of $\left(  T-\pi^{Q^{T}%
}\right)  $ -- is non-decreasing sequence of non-negative integers, i.e. a
(1D) partition from $\mathcal{Y}_{\left\vert \lambda/\mu\right\vert }$,
denoted by $Y\left(  T-\pi^{Q^{T}}\right)  $ above.

Since the function $f_{Y}$ from $\left(  \ref{33}\right)  $ is non-decreasing
as a function of $k,$ the array $\left(  f_{Y}\right)  _{ij}$ of shape
$\lambda/\mu$ is a reverse plane partition, because $Q$ is a SYT. Therefore
the array $\left(  \pi^{Q}+f_{Y}\right)  _{ij}$ is also a reverse plane
partition, and since its increments are at least $1$ along every descent of
$Q,$ it is SsYT.

The relations $B^{Ss}\bar{B}^{Ss}=\bar{B}^{Ss}B^{Ss}=1$ follow from $\left(
\ref{32}\right)  ,$ $\left(  \ref{33}\right)  .$ Hence the maps $B^{Ss}$ and
$\bar{B}^{Ss}$ are bijections.
\end{proof}

\section{Jeu de taquin - a reminder}

Jeu de taquin is a remarkable equivalence relation among skew tableaux. A very
clear definition of it and its properties are given in the Appendix 1 (written
by S. Fomin) to the Stanley book \cite{St}. We will not repeat it here. But in
order to make the presentation self-contained, we will give an example of how
to play this absorbing game, in a comics manner. \newline The game starts with
a choice of a SYT $Q$ of skew shape $\lambda/\mu,$ for example

$%
\begin{tabular}
[c]{|l|l|l|l|}\hline
&  & 2 & 3\\\hline
& 1 & 5 & 6\\\hline
4 & 7 &  & \\\hline
8 &  &  & \\\hline
\end{tabular}
,$

\noindent plus a choice of one of the adjacent cells, marked by $u$ and $d$
(\textit{up} or \textit{down} cells):\newline

$%
\begin{tabular}
[c]{|l|l|l|l|l|}\hline
& $u$ & 2 & 3 & $d$\\\hline
$u$ & 1 & 5 & 6 & \\\hline
4 & 7 & $d$ &  & \\\hline
8 & $d$ &  &  & \\\hline
$d$ &  &  &  & \\\hline
\end{tabular}
.$

\noindent Let it be the $u$ cell $\left(  2,1\right)  :$

$%
\begin{tabular}
[c]{|l|l|l|l|}\hline
& $\ast$ & 2 & 3\\\hline
& 1 & 5 & 6\\\hline
4 & 7 &  & \\\hline
8 &  &  & \\\hline
\end{tabular}
.$

\noindent Then we do the following jeu de taquin moves:

$%
\begin{tabular}
[c]{|l|l|l|l|}\hline
& $\upuparrows$ & 2 & 3\\\hline
& 1 & 5 & 6\\\hline
4 & 7 &  & \\\hline
8 &  &  & \\\hline
\end{tabular}
\rightarrow%
\begin{tabular}
[c]{|l|l|l|l|}\hline
& $1$ & 2 & 3\\\hline
& $\leftleftarrows$ & 5 & 6\\\hline
4 & 7 &  & \\\hline
8 &  &  & \\\hline
\end{tabular}
\rightarrow%
\begin{tabular}
[c]{|l|l|l|l|}\hline
& $1$ & 2 & 3\\\hline
& 5 & $\leftleftarrows$ & 6\\\hline
4 & 7 &  & \\\hline
8 &  &  & \\\hline
\end{tabular}
\rightarrow%
\begin{tabular}
[c]{|l|l|l|l|}\hline
& $1$ & 2 & 3\\\hline
& 5 & 6 & \\\hline
4 & 7 &  & \\\hline
8 &  &  & \\\hline
\end{tabular}
,$

\noindent each time moving the \textit{smallest} among the East and the South
neighbor to the vacant cell (provided it has both neighbors, otherwise we use
the one which is present), until the newborn vacant cell has no East and no
South neighbor. It turns out that the result is again a SYT (of different
shape). \newline

That completes one \textit{slide} of jeu de taquin. The above slide will be
called a \textit{slide of} $Q$ \textit{into the cell} $\left(  2,1\right)  .$

One can do the same with initial cell being a $d$ cell; the only difference is
that one has to replace the words \textit{smallest} \textit{among the East and
the South }by \textit{largest} \textit{among the West and the North. }\newline

SYTs $Q$ and $Q^{\prime}$ are called jeu de taquin equivalent, $Q\overset
{jdt}{\sim}Q^{\prime},$ if one can be obtained from another by a sequence of
jeu de taquin slides (see the Definition A1.3.2 in \cite{St}).

Note that after one $u$-slide the number of empty NW boxes decreases by one.
We can repeat the $u$ slides till there is no more empty NW boxes. In our case
that requires two more slides:%

\begin{equation}%
\begin{tabular}
[c]{|l|l|l|l|}\hline
& 1 & 2 & 3\\\hline
$\ast$ & 5 & 6 & \\\hline
4 & 7 &  & \\\hline
8 &  &  & \\\hline
\end{tabular}
\rightarrow%
\begin{tabular}
[c]{|l|l|l|l|}\hline
& 1 & 2 & 3\\\hline
$4$ & 5 & 6 & \\\hline
& 7 &  & \\\hline
8 &  &  & \\\hline
\end{tabular}
\rightarrow%
\begin{tabular}
[c]{|l|l|l|l|}\hline
& 1 & 2 & 3\\\hline
$4$ & 5 & 6 & \\\hline
7 &  &  & \\\hline
8 &  &  & \\\hline
\end{tabular}
, \label{72}%
\end{equation}
and%

\begin{equation}%
\begin{tabular}
[c]{|l|l|l|l|}\hline
$\ast$ & 1 & 2 & 3\\\hline
$4$ & 5 & 6 & \\\hline
7 &  &  & \\\hline
8 &  &  & \\\hline
\end{tabular}
\rightarrow%
\begin{tabular}
[c]{|l|l|l|l|}\hline
1 &  & 2 & 3\\\hline
$4$ & 5 & 6 & \\\hline
7 &  &  & \\\hline
8 &  &  & \\\hline
\end{tabular}
\rightarrow%
\begin{tabular}
[c]{|l|l|l|l|}\hline
1 & 2 &  & 3\\\hline
$4$ & 5 & 6 & \\\hline
7 &  &  & \\\hline
8 &  &  & \\\hline
\end{tabular}
\rightarrow%
\begin{tabular}
[c]{|l|l|l|l|}\hline
1 & 2 & 3 & $\ $\\\hline
$4$ & 5 & 6 & \\\hline
7 &  &  & \\\hline
8 &  &  & \\\hline
\end{tabular}
. \label{73}%
\end{equation}

\noindent The resulting SYT is of straight shape. Clearly, to get to the
straight shape via jdt slides one needs to choose a sequence of the $u$ cells,
so in principle the resulting straight SYT might depend on this sequence. But
it is not the case, and in fact even a stronger statement holds:

\begin{theorem}
\label{main} (A1.3.4 in the Appendix, \cite{St}) Each jeu de taquin
equivalence class contains exactly one straight-shape SYT.
\end{theorem}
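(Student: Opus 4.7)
The plan is to split the proof into existence and uniqueness. Existence proceeds by induction on $|\mu|$: if $\mu\neq\emptyset$, choose an inner corner $(i,j)\in\mu$ (a cell of $\mu$ whose right and bottom neighbours lie outside $\mu$; such a corner always exists in a nonempty Young diagram), and perform the $u$-slide into $(i,j)$. The result is a skew SYT of shape $\lambda'/\mu'$ with $|\mu'|=|\mu|-1$, jdt-equivalent to the starting $Q$. After $|\mu|$ such slides one arrives at a straight-shape SYT in the jdt class of $Q$, exactly as illustrated by the chains (\ref{72})--(\ref{73}) in the excerpt.

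Uniqueness is the main content, and I would approach it through row reading words and Knuth equivalence. Define $w(Q)$ by concatenating the rows of $Q$ from bottom to top, each row read left to right. The technical heart of the proof is then to verify that every atomic jdt step replaces $w(Q)$ by a Knuth-equivalent word, where two words are Knuth-equivalent if they differ by a sequence of elementary moves $acb\leftrightarrow cab$ with $a\le b<c$ and $bac\leftrightarrow bca$ with $a<b\le c$. This reduces to a case analysis on whether the slide step is horizontal or vertical and on the values of the entries in the two rows involved; in each case, the change in the reading word is either trivial or one Knuth transposition. Once Knuth-invariance is established, uniqueness follows from Schensted's theorem: row-insertion assigns to each word $w$ an insertion tableau $P(w)$ of straight shape, depending only on the Knuth class of $w$, and satisfying $P(w(P))=P$ whenever $P$ itself has straight shape. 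Two jdt-equivalent straight-shape SYT $P$ and $P'$ therefore satisfy $P=P(w(P))=P(w(P'))=P'$.

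The main obstacle is the case analysis establishing Knuth-invariance under jdt slides; it is routine but has to be done carefully, especially for slide steps in the top row or leftmost column where some neighbours are absent. An alternative that bypasses the reading-word bookkeeping is Fomin's growth-diagram formalism, in which each skew SYT is encoded as a saturated chain of Young diagrams and rectification arises from a local rule manifestly independent of the order of $u$-slides. Either route yields the theorem, but the Knuth-equivalence argument is the one developed in detail in Appendix~1 of \cite{St}, which makes it the natural choice given the reference already cited.
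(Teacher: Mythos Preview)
The paper does not prove this theorem at all: it is quoted verbatim as a known result (A1.3.4 in the Appendix of \cite{St}) and used as a black box. Your sketch---existence by iterated $u$-slides into inner corners, uniqueness via Knuth invariance of the row reading word under jdt and Schensted's theorem---is precisely the standard argument carried out in that Appendix, so there is nothing to compare; you have supplied the proof the paper deliberately omits, and your outline is correct.
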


For any SYT $Q$ we denote by $\mathbf{\ulcorner}Q$ the jdt equivalent SYT of
straight shape.

\section{Sch\"{u}tzenberger involution - a reminder}

In this section we remind the reader about the definition of the
Sch\"{u}tzenberger involution, $Sch$, which acts on straight-shaped SYT.
Again, the definition can be found in the Appendix 1 to \cite{St}, so we just
illustrate it with an example. In fact, our definition is a slight
reformulation of that in \cite{St}. In our version, for every SYT $Q$ of shape
$\lambda$, $\left\vert \lambda\right\vert =n,$ we define a sequence
$Q_{1},...,Q_{n}$ of SYTs of the same shape $\lambda,$ and then the SYT
$Sch\left(  Q\right)  $ is by definition the last SYT $Q_{n}.$ \newline

Let $Q=%
\begin{tabular}
[c]{|l|l|l|}\hline
1 & 2 & 7\\\hline
3 & 5 & \\\hline
4 & 6 & \\\hline
\end{tabular}
.$

$1.$ Subtract $1$ from each cell of $Q,$ and put our symbol $u$ instead of
$0.$ We get $%
\begin{tabular}
[c]{|l|l|l|}\hline
$u$ & 1 & 6\\\hline
2 & 4 & \\\hline
3 & 5 & \\\hline
\end{tabular}
\ .$

$2.$\textit{ }Do a jdt slide to the cell $u.$ We get $%
\begin{tabular}
[c]{|l|l|l|}\hline
$1$ & 4 & 6\\\hline
2 & 5 & \\\hline
3 &  & \\\hline
\end{tabular}
.$

$3.$ We put back the \textit{lost} value $7$ into the above table; it goes to
the newly vacated cell. In our example it is the cell $\left(  2,3\right)  ,$
so we are getting $%
\begin{tabular}
[c]{|l|l|l|}\hline
$1$ & 4 & 6\\\hline
2 & 5 & \\\hline
3 & $\mathbf{7}$ & \\\hline
\end{tabular}
\ .$ This is $Q_{1}=$ $%
\begin{tabular}
[c]{|l|l|l|}\hline
$1$ & 4 & 6\\\hline
2 & 5 & \\\hline
3 & $\mathbf{7}$ & \\\hline
\end{tabular}
\ .$ The number $\mathbf{7}$ is marked \textbf{bold}; we will see now the
reason for it.\newline

$1^{\prime}.$ Subtract again $1$ from each cell of $Q_{1},$ which is
\textbf{not} marked \textbf{bold}. In fact, the bold cells will not change any
more. This subtraction results in $%
\begin{tabular}
[c]{|l|l|l|}\hline
$u$ & 3 & 5\\\hline
1 & 4 & \\\hline
2 & $\mathbf{7}$ & \\\hline
\end{tabular}
\ .$

$2^{\prime}.$ We do a jdt slide to the cell $u$ of the SYT made by six regular
cells, i.e. excluding the cell containing $\mathbf{7}$. We get $%
\begin{tabular}
[c]{|l|l|l|}\hline
$1$ & 3 & 5\\\hline
2 & 4 & \\\hline
& $\mathbf{7}$ & \\\hline
\end{tabular}
.$

$3^{\prime}.$ We put back the value $6,$ which was\textit{ lost}, it goes to
the newly vacated cell. We get $Q_{2}=%
\begin{tabular}
[c]{|l|l|l|}\hline
$1$ & 3 & 5\\\hline
2 & 4 & \\\hline
$\mathbf{6}$ & $\mathbf{7}$ & \\\hline
\end{tabular}
\ .$ Again, $\mathbf{6}$ is in bold, and it will not change any more.

$1^{\prime\prime}.$ The next cycle involves SYT formed only by 5 regular
cells. We subtract $1$ from each of them, getting $%
\begin{tabular}
[c]{|l|l|l|}\hline
$u$ & 2 & 4\\\hline
1 & 3 & \\\hline
$\mathbf{6}$ & $\mathbf{7}$ & \\\hline
\end{tabular}
.$

$2^{\prime\prime}.$ A jdt slide to the cell $u$ brings us to $%
\begin{tabular}
[c]{|l|l|l|}\hline
$1$ & 2 & 4\\\hline
3 &  & \\\hline
$\mathbf{6}$ & $\mathbf{7}$ & \\\hline
\end{tabular}
.$

$3^{\prime\prime}.$ $5$ reappears in \textbf{bold }in the cell $\left(
2,2\right)  $ just vacated: $Q_{3}=%
\begin{tabular}
[c]{|l|l|l|}\hline
$1$ & 2 & 4\\\hline
3 & $\mathbf{5}$ & \\\hline
$\mathbf{6}$ & $\mathbf{7}$ & \\\hline
\end{tabular}
.$

Proceeding, we get $Q_{4}=%
\begin{tabular}
[c]{|l|l|l|}\hline
$1$ & 3 & $\mathbf{4}$\\\hline
2 & $\mathbf{5}$ & \\\hline
$\mathbf{6}$ & $\mathbf{7}$ & \\\hline
\end{tabular}
,$ $Q_{5}=%
\begin{tabular}
[c]{|l|l|l|}\hline
$1$ & 2 & $\mathbf{4}$\\\hline
$\mathbf{3}$ & $\mathbf{5}$ & \\\hline
$\mathbf{6}$ & $\mathbf{7}$ & \\\hline
\end{tabular}
,$ $Q_{6}=%
\begin{tabular}
[c]{|l|l|l|}\hline
$1$ & $\mathbf{2}$ & $\mathbf{4}$\\\hline
$\mathbf{3}$ & $\mathbf{5}$ & \\\hline
$\mathbf{6}$ & $\mathbf{7}$ & \\\hline
\end{tabular}
,$ $Q_{7}=%
\begin{tabular}
[c]{|l|l|l|}\hline
$\mathbf{1}$ & $\mathbf{2}$ & $\mathbf{4}$\\\hline
$\mathbf{3}$ & $\mathbf{5}$ & \\\hline
$\mathbf{6}$ & $\mathbf{7}$ & \\\hline
\end{tabular}
,$ so $Sch\left(  Q\right)  =%
\begin{tabular}
[c]{|l|l|l|}\hline
$1$ & $2$ & $4$\\\hline
$3$ & $5$ & \\\hline
$6$ & $7$ & \\\hline
\end{tabular}
.$

The transformation presented is an \textit{involution}:

\begin{theorem}
For every shape $\lambda$ and every SYT $Q$ of shape $\lambda$ we have%
\[
Sch\left(  Sch\left(  Q\right)  \right)  =Q.
\]

\end{theorem}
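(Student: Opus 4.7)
The plan is to reduce the claim $Sch^{2}=\mathrm{id}$ to a manifest symmetry by first establishing an alternative characterization of $Sch$ as a jdt rectification. Given a SYT $Q$ of straight shape $\lambda$ with $n=|\lambda|$ entries, define the \emph{antipode} $Q^{\star}$ as follows: embed $\lambda$ in its $\ell(\lambda)\times\lambda_{1}$ bounding rectangle, rotate the whole rectangle by $180^{\circ}$ (so the cells of $\lambda$ become a skew shape sitting in the southeast corner of the rectangle), and replace each entry $i$ by $n+1-i$. The result is a SYT of skew shape whose entries are still strictly increasing along rows and down columns. The key lemma I would prove is
\[
Sch(Q)\;=\;\mathbf{\ulcorner}\,Q^{\star}.
\]

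Granting this lemma, the involution property follows from a short formal manipulation. The operation $Q\mapsto Q^{\star}$ is a literal $180^{\circ}$-rotation combined with the order-reversing complementation $i\mapsto n+1-i$; when applied twice it is the identity on any straight-shape tableau (up to the common embedding in the bounding rectangle). Hence $Sch(Sch(Q))=\mathbf{\ulcorner}\bigl((\mathbf{\ulcorner}Q^{\star})^{\star}\bigr)$. A direct check, using that a jdt $u$-slide on a tableau corresponds to a jdt $d$-slide on its antipode and vice versa, shows that rectifying the antipode of a rectified tableau undoes the previous rectification up to jdt equivalence; Theorem~\ref{main} (uniqueness of the straight-shape rectification in each jdt equivalence class) then forces $Sch(Sch(Q))=Q$.

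The main obstacle, and the bulk of the work, is proving the key lemma $Sch(Q)=\mathbf{\ulcorner}Q^{\star}$. I would proceed by induction on $n$, matching one cycle of the paper's algorithm — subtract $1$ from the non-bold entries, perform a $u$-slide into the vacated corner, freeze the largest missing value at the newly-vacated outer corner — against a single step of the rectification of $Q^{\star}$ that strips off the cell presently containing the largest unplaced value. The bookkeeping is most transparently organized through Fomin's growth diagrams, in which both the $Sch$ procedure applied to $Q$ and the rectification of $Q^{\star}$ appear as the two opposite boundaries of an $n\times n$ grid filled with the local jdt growth rules; the $180^{\circ}$ symmetry of those rules then yields the lemma by inspection. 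A self-contained treatment of the necessary growth-diagram machinery is given in Appendix~1 of \cite{St}.
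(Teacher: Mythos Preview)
The paper does not supply its own proof of this theorem; it simply refers the reader to \cite{St}, Proposition~A1.4.2. Your proposal --- identifying $Sch(Q)$ with the rectification $\mathbf{\ulcorner}Q^{\star}$ of the antipode and then exploiting the fact that antipode interchanges $u$-slides with $d$-slides (equivalently, the $180^{\circ}$ symmetry of Fomin's growth diagrams) to deduce $\mathbf{\ulcorner}\bigl((\mathbf{\ulcorner}Q^{\star})^{\star}\bigr)=Q$ via Theorem~\ref{main} --- is exactly the standard argument developed in that appendix, so your approach is correct and coincides with what the paper defers to.
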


For the proof, see \cite{St}, Proposition A1.4.2.

\section{\textbf{Plinth=Sch\"{u}tzenberger(maj): the case of }straight-shaped
SYT.}

In this section we will prove that for a straight-shaped SYT $Q$ of shape
$\lambda$ we have

\begin{theorem}
\label{mainT}%
\begin{equation}
\left\vert \mathsf{p}\left(  Q\right)  \right\vert =\mathsf{maj}\left(
Sch\left(  Q\right)  \right)  . \label{41}%
\end{equation}

\end{theorem}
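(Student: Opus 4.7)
The plan is to reduce the identity $|\mathsf{p}(Q)|=\mathsf{maj}(Sch(Q))$ to an explicit formula for the plinth volume together with the classical symmetry $\mathsf{Des}(Sch(Q))=\{n-j:j\in\mathsf{Des}(Q)\}$.

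First, I would write down the plinth explicitly. By construction, $\pi^{Q}\in\mathcal{P}_{Q}^{Ss}$ is the pointwise-minimal SsYT satisfying $\pi^{Q}_{Q(k)}\le \pi^{Q}_{Q(k+1)}$ with strict inequality exactly when $k\in\mathsf{Des}(Q)$. Choosing the smallest values forces $\pi^{Q}_{Q(1)}=0$ and
\[
\pi^{Q}_{Q(k+1)}-\pi^{Q}_{Q(k)}=\begin{cases}1 & \text{if }k\in\mathsf{Des}(Q),\\ 0 & \text{otherwise,}\end{cases}
\]
so that
\[
\pi^{Q}_{Q(k)}=\#\bigl(\mathsf{Des}(Q)\cap\{1,\dots,k-1\}\bigr).
\]
A short check (using that $Q$ is itself a SYT) confirms that this recipe indeed produces a valid SsYT: weak row-increase and strict column-increase follow from the fact that whenever $Q(k)$ and $Q(k')$ lie in the same row (resp.\ column), the path from $k$ to $k'$ in the natural order crosses at least zero (resp.\ one) descents.

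Second, I would sum the telescope and exchange the order of summation:
\[
|\mathsf{p}(Q)|=\sum_{k=1}^{n}\pi^{Q}_{Q(k)}=\sum_{j\in\mathsf{Des}(Q)}\#\{k:k>j\}=\sum_{j\in\mathsf{Des}(Q)}(n-j).
\]
Thus the volume of the plinth depends on $Q$ only through its descent set, and the theorem reduces to
\[
\sum_{j\in\mathsf{Des}(Q)}(n-j)=\sum_{i\in\mathsf{Des}(Sch(Q))}i.
\]

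Third, I would invoke the classical descent-complementation property of evacuation,
\[
\mathsf{Des}(Sch(Q))=\{\,n-j:j\in\mathsf{Des}(Q)\,\},
\]
proved in Stanley \cite{St}, Appendix 1 (Fomin). Granted this, the right-hand side above becomes $\sum_{j\in\mathsf{Des}(Q)}(n-j)$ by reindexing $i=n-j$, matching the left-hand side and completing the argument. If one wishes to prove the descent symmetry from scratch rather than cite it, the natural route is induction on $n$: one tracks a single Schützenberger step $Q\mapsto Q_{1}$ (subtract $1$ from every entry, perform a jdt slide into the corner, place $n$ in the newly vacated cell) and verifies that the shift of the descent structure is precisely $j\mapsto n-j$ once all $n$ bold entries are in place. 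This bookkeeping on how a single jdt slide transforms descents is the main technical obstacle; the formula for $|\mathsf{p}(Q)|$ is elementary, and everything else is either formal reindexing or a citation.
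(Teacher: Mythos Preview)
Your proposal is correct and follows essentially the same plan as the paper: compute $|\mathsf{p}(Q)|=\sum_{j\in\mathsf{Des}(Q)}(n-j)$ from the explicit description of the plinth, and then match this to $\mathsf{maj}(Sch(Q))$ via the descent-complementation $\mathsf{Des}(Sch(Q))=\{n-j:j\in\mathsf{Des}(Q)\}$. The only difference is in how the complementation is justified: the paper obtains it through the Robinson--Schensted correspondence (using that $Q$ is the recording tableau of some $w$, that $\mathsf{Des}(Q)=\mathsf{Des}(w)$, and that $Sch(Q)$ is the recording tableau of $w'=\omega_{0}w\omega_{0}^{-1}$, whose descents are the reversal of those of $w$), whereas you cite it directly from Fomin's appendix or propose a jdt-based induction---but this is a matter of packaging the same classical fact.
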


\begin{proof}
Let $Q$ be a standard Young tableau (SYT), and $\pi^{Q}$ be the plinth of $Q.$
Let $\mathsf{Des}\left(  Q\right)  =\left\{  i_{1},...,i_{l}\right\}  $ be the
contents of descent cells, so $\mathsf{maj}\left(  Q\right)  =i_{1}+...+i_{l}$
is the sum of the descent values.

\vskip.2cm The volume%
\begin{align}
&  \mathsf{vol}\left(  \pi^{Q}\right) \nonumber\\
&  =0\cdot i_{1}+1\cdot(i_{2}-i_{1})+2\cdot(i_{3}-i_{2})+\ldots+(l-1)\cdot
(i_{l}-i_{l-1})+l\cdot(n-i_{l})\label{42}\\
&  =n-i_{1}+...+n-i_{l}=nl-\left(  i_{1}+...+i_{l}\right)
=nl-\text{$\mathsf{maj}$}\left(  Q\right)  \ .\nonumber
\end{align}

Next, we use two facts concerning the Robinson-Schensted correspondence:

\begin{itemize}
\item Let $w$ be any permutation, such that the corresponding
Robinson-Schensted pair of SYT-s is $\left(  P,Q\right)  ,$ i.e. our $Q$ is a
recording tableau of $w.$ Then $\mathsf{Des}\left(  Q\right)  =\mathsf{Des}%
\left(  w\right)  .$

\item Consider the permutation $w^{\prime}$ obtained by writing $w$ backwards
and replacing the entries $1,2,...,n$ by $n,n-1,...,1$ ($w\rightarrow
w^{\prime}$ is an involution\footnote{Let $\omega_{0}:=(1,n)(2,n-1)\ldots$ be
the longest element in the Coxeter group $S_{n}$. Then $w^{\prime}=\omega
_{0}w\omega_{0}^{-1}$.}). Then Sch\"{u}tzenberger involution $Sch\left(
Q\right)  $ is the recording tableaux $Q^{\prime}$ of $w^{\prime}.$
\end{itemize}

The descent set of $w^{\prime}$ is the reversal of $\mathsf{Des}\left(
w\right)  :$ $(i,i+1)$ is a descent in $w$ if and only if $(n-i,n+1-i)$ is a
descent in $w^{\prime}$. Hence
\[
\text{$\mathsf{maj}$}\left(  Q^{\prime}\right)  =n-i_{1}+...+n-i_{l}%
=\mathsf{vol}\left(  \pi^{Q}\right)  =\left\vert \mathsf{p}\left(  Q\right)
\right\vert \ .
\]

\end{proof}

In particular, $\mathsf{maj}\left(  Q\right)  +\mathsf{maj}\left(  Q^{\prime
}\right)  =nl.$

\begin{remark}
After conjecturing the relation $\left(  \ref{41}\right)  $ and checking it
numerically in several cases we got a message from Professor S. Fomin, who
explained to us that the expression $\left(  \ref{42}\right)  $ is nothing
else but the value $\mathsf{maj}\left(  w^{\prime}\right)  .$ We are grateful
to him for this remark.
\end{remark}

\section{Extending the Sch\"{u}tzenberger involution}

After realizing that the two statistics -- $\left\vert \mathsf{p}\left(
Q\right)  \right\vert $ and $\mathsf{maj}\left(  Q\right)  $ -- are
equidistributed on the set of SYT $Q$-s for each straight shape $\lambda,$ we
have checked numerically that the same holds for some skew shapes $\lambda
/\mu.$ Getting the positive answer we got the idea that the Sch\"{u}tzenberger
involution can be extended to the set of SYT-s of skew shape, so that the
relation $\left(  \ref{41}\right)  $ still holds. This is indeed the case, as
we will show below. To do this we will use some beautiful results of Haiman,
\cite{H}. In particular, we will use his \textit{dual (to jeu de taquin)
equivalence relation }between the SYT of the same shape.

Let $Q$ be a SYT of the shape $\lambda/\mu,$ and let $\left(  c_{1}%
,...,c_{l}\right)  $ be a sequence of cells for which it is meaningful to form
a sequence $Q_{0}=Q,Q_{1},...,Q_{l}$ of SYT-s, such that each $Q_{j}$ is a
result of slide of $Q_{j-1}$ into the cell $c_{j}.$ For example, consider two
SYT $X$ and $Y,$ of the shapes $sh\left(  X\right)  ,$ $sh\left(  Y\right)  ,$
such that the Young diagrams $sh\left(  X\right)  ,$ $sh\left(  Y\right)
\subset\mathbb{R}^{2}$ do not intersect and their union $sh\left(  X\right)
\cup$ $sh\left(  Y\right)  $ is again a Young diagram, while $X$ lies to the
NW of $Y.$ In that case we will say that $sh\left(  Y\right)  $
\textbf{extends} $sh\left(  X\right)  .$ Then the sequence of cells forming
$sh\left(  Y\right)  ,$ taken in the order $Y$ can be used to slide the SYT
$X$ in SE direction. The result will be denoted by $j_{Y}\left(  X\right)  .$
In the corresponding way we can move $Y$ in the NW direction, using $X.$ We
will get SYT, denoted by $j^{X}\left(  Y\right)  .$ The sequence of cells
vacated as we form $j_{Y}\left(  X\right)  $ defines a SYT which we denote by
$\left[  V:j_{Y}\left(  X\right)  \right]  ;$ this SYT is located to the NW of
the SYT $j_{Y}\left(  X\right)  .$ The SYT $\left[  V:j^{X}\left(  Y\right)
\right]  ,$ located to the SE of $j^{X}\left(  Y\right)  ,$ is defined in
analogous way. Clearly,%
\begin{equation}
j_{\left[  V:j^{X}\left(  Y\right)  \right]  }j^{X}\left(  Y\right)
=Y,\ j^{\left[  V:j_{Y}\left(  X\right)  \right]  }j_{Y}\left(  X\right)  =X.
\label{56}%
\end{equation}

\begin{lemma}
(\cite{H}) Let $X$, $Y$ be a pair of SYT, and $sh\left(  Y\right)  $ extends
$sh\left(  X\right)  .$

Then $\left[  V:j^{X}\left(  Y\right)  \right]  =j_{Y}\left(  X\right)  ,$
$\left[  V:j_{Y}\left(  X\right)  \right]  =j^{X}\left(  Y\right)  .$
\end{lemma}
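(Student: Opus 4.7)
The plan is to prove both identities
\[
[V:j^X(Y)] = j_Y(X), \qquad [V:j_Y(X)] = j^X(Y),
\]
by induction on $|Y|$. Since the two identities are symmetric under the simultaneous interchange of $X$ with $Y$ and of the NW and SE directions (which also swaps $u$-slides with $d$-slides), it suffices to prove one; I focus on the second.

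In the base case $|Y|=1$, let $c$ be the unique cell of $Y$. Then $j_Y(X)$ consists of a single $d$-slide starting at $c$, tracing a path $c=c_0,c_1,\dots,c_r=v$ through cells of $X$ and terminating at the vacated cell $v$, so $[V:j_Y(X)]=\{v\mapsto 1\}$. Along this path the $X$-values $X(c_1)>X(c_2)>\dots>X(c_r)$ occur in strictly decreasing order of labels, because at each step a $d$-slide moves the larger of two neighbors and both rows and columns of $X$ increase. To compute $j^X(Y)$ I perform the $|X|$ successive $u$-slides in decreasing order of $X$-label. A central observation is that $X$-values are never displaced during these $u$-slides (they are only discarded at their original cells): when an $X$-cell $q$ is emptied, all E and S neighbors of $q$ lying in $sh(X)$ have already been vacated by prior slides (their labels, being SE of $q$, are larger), so the only candidates for the slide are $Y$-values. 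By a case analysis, the slides for off-path $X$-values are trivial (their E/S neighbors are all outside or already empty), whereas the slides for the on-path values $X(c_i)$, processed in the order $i=1,2,\dots,r$ (matching the decreasing-label order), each carry the unique $Y$-value from $c_{i-1}$ to $c_i$. Thus $j^X(Y)=\{v\mapsto 1\}=[V:j_Y(X)]$.

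For the inductive step $|Y|>1$, I peel off the cell of $Y$ with the maximum label. Let $c$ be the cell of $Y$ labeled $|Y|$; being the max-label cell it is an outer corner of $sh(Y)$, and since $sh(X)$ lies NW of $sh(Y)$, $c$ is also an outer corner of $\lambda=sh(X)\cup sh(Y)$. Hence $Y':=Y\setminus\{c\}$ is a SYT of shape $sh(Y)\setminus\{c\}$ whose shape still extends $sh(X)$. Using the factorization $j_Y(X)=j_c(j_{Y'}(X))$, setting $X':=j_{Y'}(X)$ and letting $v$ denote the cell vacated by the final $d$-slide into $c$, I obtain
\[
[V:j_Y(X)] \;=\; [V:j_{Y'}(X)] \,\sqcup\, \{v\mapsto |Y|\}.
\]
The inductive hypothesis applied to the pair $(X,Y')$ gives $[V:j_{Y'}(X)]=j^X(Y')$.

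The main obstacle is to establish the companion decomposition
\[
j^X(Y) \;=\; j^X(Y') \,\sqcup\, \{v\mapsto |Y|\},
\]
i.e., to show that the $|X|$ $u$-slides producing $j^X(Y)$ reproduce the slides of $j^X(Y')$ on the $Y'$-values and additionally transport the largest $Y$-value $|Y|$ from its starting cell $c$ to the cell $v$. The mechanism is that $|Y|$, being the largest $Y$-value, is chosen in a $u$-slide only when it is the unique filled neighbor of the current empty cell; whenever a smaller $Y'$-value is also available, the latter is preferred by the $u$-slide rule. Consequently the slides involving only $Y'$-values and $X$-cells proceed exactly as in $j^X(Y')$, while the ``forced'' moves of $|Y|$ retrace the reverse of the $d$-slide path at $c$ (the base-case analysis applied to the intermediate tableau $X'$) and deposit $|Y|$ at $v$. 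Piecing the two contributions together yields the desired identity and closes the induction.
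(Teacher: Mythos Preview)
The paper does not prove this lemma; it is quoted from Haiman without argument, so there is no in-paper proof to compare against. I therefore evaluate your attempt on its own merits.

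Your overall strategy and the base case are fine. The observation that in each $u$-slide the maximal label $|Y|$ is passed over whenever a smaller $Y'$-value competes with it is correct, and it does yield $j^X(Y)\big|_{\{1,\dots,|Y|-1\}}=j^X(Y')$.

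The gap is in your justification that $|Y|$ lands at $v$. You assert that its forced moves ``retrace the reverse of the $d$-slide path at $c$ (the base-case analysis applied to the intermediate tableau $X'$)''. But the $u$-slides in $j^X(Y)$ are taken at the cells of $X$, in the order dictated by $X$, not at the cells of $X'$. To trace the moves of $|Y|$ you must know \emph{which cell the $Y'$-version of each slide vacates}: $|Y|$ moves from its current cell $e$ to $e'$ precisely when the corresponding slide in $j^X(Y')$ vacates a W- or N-neighbour $e'$ of $e$. The vacated cells of $j^X(Y')$, with their induced labelling, form the tableau $[V{:}\,j^X(Y')]$; your base-case reasoning then shows that $|Y|$ follows the $d$-slide path of $[V{:}\,j^X(Y')]$ into $c$. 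For that path to end at $v$ you need $[V{:}\,j^X(Y')]=X'$ --- which is exactly the \emph{first} identity for the smaller pair $(X,Y')$. Your induction scheme (on $|Y|$, for the second identity only, recovering the first by the global symmetry at the end) does not supply this: the symmetry converts the first identity for $(X,Y')$ into the second identity for a reflected pair whose SE tableau has $|X|$ cells, not $|Y'|$, so it is not covered by the hypothesis.

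The repair is easy: run the induction on $|X|+|Y|$ and prove both identities together. Then both identities for $(X,Y')$ are available in the inductive hypothesis, and the rest of your argument goes through.
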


\begin{definition}
(\cite{H}) Let $P,Q$ be a pair of SYT of the same (skew) shape. Suppose that
every sequence $\left(  c_{1},...,c_{l}\right)  $ which is a sequence of
slides for both $P$ and $Q$ yields two tableaux of the same shape when applied
to $P,$ resp. $Q.$ Then the SYT-s $P$ and $Q$ are said to be \textit{dual
equivalent, }$P\overset{Djdt}{\sim}Q.$
\end{definition}

\begin{theorem}
(\cite{H}) Two SYT $P,Q$ are dual equivalent, $P\overset{Djdt}{\sim}Q,$ if and
only if there exists a pair $X,Y$ of SYT of equal \textbf{straight} shape
$sh\left(  X\right)  =sh\left(  Y\right)  ,$ such that $P\overset{jdt}{\sim}X$
and $Q\overset{jdt}{\sim}Y.$ In particular, any two SYT $X,Y$ of the same
\textbf{straight} shape, $sh\left(  X\right)  =sh\left(  Y\right)  ,$ are dual
equivalent, $X\overset{Djdt}{\sim}Y.$

Suppose that two SYT $P,Q$ have the same shape $sh\left(  P\right)  =sh\left(
Q\right)  $ and are dual equivalent, $P\overset{Djdt}{\sim}Q.$ Suppose also
that the shape $sh\left(  P\right)  =sh\left(  Q\right)  $ extends the shape
$sh\left(  X\right)  $ of the SYT $X.$ Then%
\begin{equation}
j_{P}\left(  X\right)  =j_{Q}\left(  X\right)  , \label{52}%
\end{equation}
and
\begin{equation}
\left[  V:j_{P}\left(  X\right)  \right]  \overset{Djdt}{\sim}\left[
V:j_{Q}\left(  X\right)  \right]  . \label{51}%
\end{equation}

\end{theorem}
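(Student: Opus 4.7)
The plan is to follow Haiman's approach via \emph{elementary dual equivalence} moves. Call $P \to P'$ an elementary dual equivalence if $P'$ is obtained from $P$ by a local swap involving three consecutive entries $i, i+1, i+2$ in a configuration that preserves the SYT property (namely, when $i+1$ does not lie strictly between $i$ and $i+2$ in row order, a swap of $i \leftrightarrow i+1$ or $i+1 \leftrightarrow i+2$ is available). The first technical step would be to verify the local compatibility lemma: if $P \to P'$ is elementary and $c$ is any cell adjacent to $sh(P)$ admitting a jdt slide, then the slides of $P$ and $P'$ into $c$ yield SYT of the same shape that are themselves related by an elementary move (or are equal). This is a finite case analysis based on the position of the slide path relative to the cells holding $i, i+1, i+2$.

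Granted this lemma, elementary dual equivalence propagates through arbitrary slide sequences, hence is a dual equivalence in the sense of the definition. The main combinatorial content of the theorem is then the connectivity statement: any two SYT of the same \textbf{straight} shape $\lambda$ are linked by a chain of elementary moves. I would prove this by induction on $|\lambda|$, first arranging (via elementary moves) that the entry $n$ occupies a common outer corner of $\lambda$ in both tableaux, then appealing to the inductive hypothesis for the subshape with $n$ deleted. This connectivity step is the main obstacle and contains the real combinatorial work.

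With connectivity in place, both implications of part~(A) follow quickly. For the only-if direction, take a sequence of $u$-slides taking $P$ to $\mathbf{\ulcorner} P$; the same sequence is valid for $Q$ and, by $P \overset{Djdt}{\sim} Q$, carries $Q$ to some $Y$ of the same straight shape as $\mathbf{\ulcorner} P$. For the if direction, given $P \overset{jdt}{\sim} X$ and $Q \overset{jdt}{\sim} Y$ with $sh(X) = sh(Y)$ straight, connectivity gives $X \overset{Djdt}{\sim} Y$; since jdt slides preserve dual equivalence (immediate from the definition by appending to slide sequences), this lifts back to $P \overset{Djdt}{\sim} Q$. The ``in particular'' clause is exactly the connectivity statement.

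For part~(B), I would invoke the preceding lemma to rewrite $j_P(X) = [V : j^X(P)]$ and $j_Q(X) = [V : j^X(Q)]$. Since $P \overset{Djdt}{\sim} Q$, applying the common slide sequence dictated by $X$ produces tableaux of identical shape at each intermediate step --- dual equivalence of $P$ and $Q$ forces dual equivalence after every prefix of slides --- so the sequence of vacated cells coincides. Hence $[V : j^X(P)] = [V : j^X(Q)]$ as SYT, giving (\ref{52}). For (\ref{51}), the same preceding lemma yields $[V : j_P(X)] = j^X(P)$ and $[V : j_Q(X)] = j^X(Q)$, and these are dually equivalent because $j^X(P) \overset{jdt}{\sim} P \overset{Djdt}{\sim} Q \overset{jdt}{\sim} j^X(Q)$ together with the persistence of dual equivalence under jdt slides.
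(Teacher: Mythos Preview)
The paper does not prove this theorem; it is cited from Haiman and used as a black box, so there is no in-paper proof to compare against. Your outline follows Haiman's own strategy (elementary dual equivalence moves, their local compatibility with single slides, and connectivity for straight shapes), and your treatment of part~(B) via the identity $j_P(X)=[V:j^{X}(P)]$ is correct.

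There is, however, a real gap in the ``if'' direction of part~(A). From $X\overset{Djdt}{\sim}Y$ you want to ``lift back'' to $P\overset{Djdt}{\sim}Q$ using persistence of dual equivalence under slides. But persistence runs only forward: if $\sigma$ is a slide sequence with $\sigma(P)=X$, then $\sigma^{-1}$ applied to $Y$ yields some $Q'$ of shape $\lambda/\mu$ with $P\overset{Djdt}{\sim}Q'$ and $Q'\overset{jdt}{\sim}Q$, yet for skew shapes ``same shape and jdt-equivalent'' does not force $Q'=Q$. Equivalently, you would need $[V:j^{S}(P)]=[V:j^{S}(Q)]$ for a filling $S$ of $\mu$, which by the preceding lemma is $j_{P}(S)=j_{Q}(S)$ --- precisely an instance of $\left(\ref{52}\right)$, hence a consequence of the very dual equivalence you are trying to establish. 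Haiman avoids this circularity by a different device: he extends $P$ and $Q$ to straight-shape tableaux by adjoining a common SYT on $\mu$ (and, when needed, a common outer piece), applies the straight-shape connectivity there, and checks that the resulting chain of elementary moves restricts to a chain on the $\lambda/\mu$ portion.
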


The above information enables us to extend the Sch\"{u}tzenberger involution,
defined initially for the straight shaped SYT, to the skew shaped SYT.

\begin{definition}
Let the SYT $Q$ be of the shape $sh\left(  Q\right)  =\lambda/\mu.$ Choose
some SYT $P$ of shape $sh\left(  P\right)  =\mu.$ We define%
\begin{equation}
\widetilde{Sch}\left(  Q\right)  =j_{\left[  V:j^{P}\left(  Q\right)  \right]
}\left(  Sch\left(  j^{P}\left(  Q\right)  \right)  \right)  . \label{54}%
\end{equation}

\end{definition}

\begin{theorem}
The SYT $\widetilde{Sch}\left(  Q\right)  $ in $\left(  \ref{54}\right)  $ is
well-defined, i.e. it does not depend on the particular choice of the SYT $P$
of shape $\mu.$

The SYT $\widetilde{Sch}\left(  Q\right)  $ has the same shape as $Q,$ and%
\[
\widetilde{Sch}\left(  \widetilde{Sch}\left(  Q\right)  \right)  =Q.
\]
For $Q$ of straight shape $\widetilde{Sch}\left(  Q\right)  =Sch\left(
Q\right)  .$
\end{theorem}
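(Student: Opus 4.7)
The plan is to split the theorem into three claims and handle each using the inversion identities (\ref{56}) together with Haiman's dual-equivalence results (\ref{52}), (\ref{51}), plus their NW-slide analogs. First I would dispose of the straight-shape case: when $\mu=\varnothing$, the only SYT of shape $\mu$ is the empty tableau, so both $j^{\varnothing}$ and $j_{\varnothing}$ are vacuous and (\ref{54}) collapses immediately to $\widetilde{Sch}(Q)=Sch(Q)$.

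For well-definedness, I would take two SYT $P_{1},P_{2}$ of shape $\mu$ and observe that they share the same straight shape, so Haiman's theorem gives $P_{1}\overset{Djdt}{\sim}P_{2}$. Using the NW-slide analog of (\ref{52}), obtained from its SE form by the $180^{\circ}$ rotation that exchanges $u$- and $d$-slides, I would deduce $j^{P_{1}}(Q)=j^{P_{2}}(Q)$; call the common value $X^{\ast}$. The NW analog of (\ref{51}) then provides $[V:j^{P_{1}}(Q)]\overset{Djdt}{\sim}[V:j^{P_{2}}(Q)]$. Applying the classical $Sch$ to the common $X^{\ast}$ and then invoking (\ref{52}) with these dual-equivalent SE targets would show that the subsequent SE slides produce identical tableaux, proving $\widetilde{Sch}(Q)$ is independent of $P$.

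For shape preservation and involutivity, I would write $X=j^{P}(Q)$, $Y=[V:j^{P}(Q)]$, $\nu=sh(X)$. Since $X$ and $Sch(X)$ are two SYT of the same straight shape $\nu$, they are dual equivalent; sliding each by $Y$ therefore yields tableaux of the same shape, and combining with $j_{Y}(X)=Q$ from (\ref{56}) gives $sh(\widetilde{Sch}(Q))=\lambda/\mu$. For involutivity, I would set $Q'=\widetilde{Sch}(Q)=j_{Y}(Sch(X))$ and $P^{\ast}:=[V:j_{Y}(Sch(X))]$; the shape calculation above guarantees $sh(P^{\ast})=\mu$, so $P^{\ast}$ is a legitimate auxiliary tableau for the second application of $\widetilde{Sch}$. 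Applying (\ref{56}) to the SE slide that produced $Q'$ would give $j^{P^{\ast}}(Q')=Sch(X)$ and $[V:j^{P^{\ast}}(Q')]=Y$, so substituting into (\ref{54}) with $P^{\ast}$ in place of $P$ yields $\widetilde{Sch}(Q')=j_{Y}(Sch(Sch(X)))=j_{Y}(X)=Q$, using that the classical $Sch$ is an involution; well-definedness then transfers this identity to any other choice of auxiliary tableau.

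The hardest part will be the well-definedness step, because it requires the NW-slide analogs of Haiman's (\ref{52}) and (\ref{51}), which the excerpt only states in SE form. I would either spell out the $180^{\circ}$-rotation derivation or cite Haiman's original paper for a direction-free formulation. A secondary but genuine subtlety is the logical ordering: shape preservation must be established \emph{before} involutivity, as the legitimacy of $P^{\ast}$ as an auxiliary tableau of shape $\mu$ hinges on that claim.
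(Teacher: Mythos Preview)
Your proposal is correct and follows essentially the same logic as the paper's proof, though the packaging differs in two small ways. First, for the independence of $j^{P}(Q)$ from $P$, the paper simply invokes Theorem~\ref{main} (every jdt class contains a unique straight-shape SYT), so no NW analog of (\ref{52}) is needed at that point; your rotation argument is valid but heavier than necessary. Second, for shape preservation and involutivity the paper introduces the involution $\Delta:(P,Q)\mapsto(j^{P}(Q),\,j_{Q}(P))$ and records the whole construction as the three-step string $\mathcal{T}=\Delta\circ Sch\circ\Delta$ acting on the pair $(P,Q)$; the involutivity of $\widetilde{Sch}$ then falls out immediately from the involutivity of both $\Delta$ and $Sch$. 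Your argument with $X,Y,P^{\ast}$ is exactly the same computation unpacked by hand, and your observation that shape preservation must precede involutivity (to ensure $sh(P^{\ast})=\mu$) is correct and implicit in the paper as well. Your flagged concern about the NW analogs of (\ref{51}) and (\ref{52}) is legitimate---the paper also uses them without comment (e.g.\ in deriving (\ref{55}) and in the final $\Delta$-step)---and your $180^{\circ}$-rotation derivation is the right way to justify them.
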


\begin{proof}
Since the shape of the SYT $j^{P}\left(  Q\right)  $ is a straight shape, its
Sch\"{u}tzenberger involution $Sch\left(  j^{P}\left(  Q\right)  \right)  $ is
already defined above. As we know from the Theorem \ref{main}, the SYT
$j^{P}\left(  Q\right)  $ does not depend on the SYT $P$ (which eliminates the
second appearance of $P$ in $\left(  \ref{54}\right)  $). This might not be
true, however, for the SYT $\left[  V:j^{P}\left(  Q\right)  \right]  .$ For
different $P,P^{\prime}$ we know only that
\begin{equation}
\left[  V:j^{P}\left(  Q\right)  \right]  \overset{Djdt}{\sim}\left[
V:j^{P^{\prime}}\left(  Q\right)  \right]  \label{55}%
\end{equation}
-- indeed, $P$ and $P^{\prime}$ are straight and of the same shape, so
$P\overset{Djdt}{\sim}P^{\prime}$ and we can use $\left(  \ref{51}\right)  .$
However, the relation $\left(  \ref{55}\right)  $ is sufficient, since due to
$\left(  \ref{52}\right)  ,$ we have that
\[
j_{\left[  V:j^{P}\left(  Q\right)  \right]  }\left(  Sch\left(  j^{P}\left(
Q\right)  \right)  \right)  =j_{\left[  V:j^{P^{\prime}}\left(  Q\right)
\right]  }\left(  Sch\left(  j^{P}\left(  Q\right)  \right)  \right)
=j_{\left[  V:j^{P^{\prime}}\left(  Q\right)  \right]  }\left(  Sch\left(
j^{P^{\prime}}\left(  Q\right)  \right)  \right)  ,
\]
so the SYT $\widetilde{Sch}\left(  Q\right)  $ is well-defined.

Let us now check that $\widetilde{Sch}\left(  \widetilde{Sch}\left(  Q\right)
\right)  =Q.$ To see this let us introduce the notation $\Delta$ for the
involution $\left(  P,Q\right)  \overset{\Delta}{\rightarrow}\left(
j^{P}\left(  Q\right)  ,\left(  j_{Q}\left(  P\right)  \right)  \right)  $,
where SYT $Q$ extends the straight SYT $P.$ Then we find the SYT
$\widetilde{Sch}\left(  Q\right)  $ to appear at the end of the transformation
string $\mathcal{T}$:\newline%
\[
\mathcal{T\ }:\ \left(  P,Q\right)  \overset{\Delta}{\rightarrow}\left(
j^{P}\left(  Q\right)  ,\left(  j_{Q}\left(  P\right)  \right)  \right)
\overset{Sch}{\rightarrow}\left(  Sch\left[  j^{P}\left(  Q\right)  \right]
,\left(  j_{Q}\left(  P\right)  \right)  \right)  \overset{\Delta}%
{\rightarrow}\left(  P,\widetilde{Sch}\left(  Q\right)  \right)  .\newline%
\]
We have used here the fact that $j^{P}\left(  Q\right)  \overset{Djdt}{\sim
}Sch\left[  j^{P}\left(  Q\right)  \right]  ,$ since both SYTs are straight
and of the same shape. From here we see that the shape of SYT $\widetilde
{Sch}\left(  Q\right)  $ is that of $Q,$ since the shape of the union of the
pair of two tables at each step of $\mathcal{T\ }$is not changing. Repeating
$\mathcal{T}\ $once more we have%
\begin{align*}
\mathcal{T\ }  &  :\ \left(  P,\widetilde{Sch}\left(  Q\right)  \right)
\overset{\Delta}{\rightarrow}\left(  j^{P}\left(  \widetilde{Sch}\left(
Q\right)  \right)  ,\left(  j_{\widetilde{Sch}\left(  Q\right)  }\left(
P\right)  \right)  \right)  \overset{1}{=}\left(  Sch\left[  j^{P}\left(
Q\right)  \right]  ,\left(  j_{Q}\left(  P\right)  \right)  \right) \\
&  \overset{Sch}{\rightarrow}\left(  Sch\left[  Sch\left[  j^{P}\left(
Q\right)  \right]  \right]  ,\left(  j_{Q}\left(  P\right)  \right)  \right)
\overset{2}{=}\left(  j^{P}\left(  Q\right)  ,\left(  j_{Q}\left(  P\right)
\right)  \right)  \overset{\Delta}{\rightarrow}\left(  P,Q\right)  .
\end{align*}
Indeed, the first equality holds since $\Delta$ is an involution, and the
second -- since $Sch$ is.
\end{proof}

\section{Plinth=Sch\"{u}tzenberger(maj): general case}

Here we prove the Theorem \ref{mainT} for the SYT $Q$ of arbitrary skew shape
$\lambda/\mu.$ The proof is based on the observation that the set of descents
values (see $\left(  \ref{48}\right)  $) stays unchanged as we perform the
steps of jdt slides. In order to see this we have to generalize slightly the
notion of descent. Indeed, we defined the descents above for 2D arrays of
integers having the shape of a skew Young diagram. However, one can extend the
definition in an obvious way for arrays $Q$ having the shape of a skew Young
diagram \textit{with holes }(which have already appeared in $\left(
\ref{72}\right)  ,$ $\left(  \ref{73}\right)  $)\textit{,} by repeating the
definition $\left(  \ref{48}\right)  ,$ that $k$ is a descent value of $Q$ iff
the cell $Q\left(  k+1\right)  $ belongs to the lower row than the row of the
cell $Q\left(  k\right)  .$

\begin{lemma}
Let $Q^{\prime}$ be a SYT (may be with a hole), and $Q^{\prime\prime}$ be the
SYT obtained from $Q^{\prime}$ by a single jdt move. Then $\mathsf{Des}\left(
Q^{\prime}\right)  =\mathsf{Des}\left(  Q^{\prime\prime}\right)  .$
\end{lemma}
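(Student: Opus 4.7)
The plan is this: a single jdt move exchanges the current hole with exactly one adjacent neighbor carrying some value $v$, so the only cell whose occupant changes is the cell of $v$ itself. Since the descent status of $k$ depends only on the rows of the cells containing $k$ and $k+1$, the only descents that can possibly toggle are those at $k = v-1$ and $k = v$, and these can change only if the row of $v$ changes.

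If the move is horizontal (an up-slide taking the East neighbor, or a down-slide taking the West neighbor), $v$ stays in its row and $\mathsf{Des}$ is preserved trivially. The substantive case is vertical: say an up-slide moving $v$ from $(r+1, c)$ to $(r, c)$, the hole passing from $(r, c)$ to $(r+1, c)$ (down-slides are symmetric). I would reduce the claim to two non-occurrence statements: $v + 1$ is not in row $r+1$, and $v - 1$ is not in row $r$. For $v + 1$: the jdt selection rule gives $w > v$ at the East cell $(r, c+1)$ whenever it is present; row monotonicity in row $r+1$ forces $v+1$, if in that row, to a column $> c$; at column $c+1$, column monotonicity demands $w < v+1$, conflicting with $w > v$; at any column $\geq c+2$, the cell $(r+1, c+1)$ would have to hold an integer strictly between $v$ and $v+1$, which is impossible. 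The argument for $v-1$ is symmetric: placing $v - 1$ in row $r$ at a column different from $c$ forces a value in the empty interval $(v-1, v)$ to appear at $(r+1, c-1)$ or $(r, c-1)$, via row/column monotonicity.

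The technical obstacle, which is precisely why the lemma is stated for a SYT \emph{with a hole}, is to verify that the reference cells $(r, c \pm 1)$ and $(r + 1, c \pm 1)$ in the above contradictions either lie in the intermediate shape with an honest value, or that their absence can itself be ruled out. This is handled using the partition inequalities $\lambda_r \geq \lambda_{r+1}$ and $\mu_r \geq \mu_{r+1}$: rows and columns of $\lambda/\mu$ are contiguous blocks of cells, and the intermediate shape during jdt differs from $\lambda/\mu$ only by moving a single cell (the hole). In particular, the potentially damaging configuration with $v - 1$ at $(r, c-1)$ while $(r+1, c-1)$ lies outside the shape is excluded by $\mu_r \geq \mu_{r+1}$, which forces $(r+1, c-1)$ to be in the shape whenever $(r, c-1)$ is, given that the hole is at $(r, c)$. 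Once these boundary exceptions are disposed of, the vertical case collapses to the short integer-interval contradictions sketched above.
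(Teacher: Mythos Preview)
Your argument is correct and follows the same route as the paper's: reduce to the vertical move, note that only the descent status at $v-1$ and $v$ can change, and rule out $v+1$ in row $r+1$ and $v-1$ in row $r$ via the local monotonicity constraints in the $2\times2$ block around the hole. The only difference is organizational---the paper splits explicitly into four cases according to which of the East/South neighbors of the hole are present, whereas you absorb those boundary situations into the partition inequalities $\lambda_r\ge\lambda_{r+1}$ and $\mu_r\ge\mu_{r+1}$.
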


\begin{proof}
Depending on the shape of the SYT $Q^{\prime}$ and the location of the cell
$\ast$ in it, there are four different cases of the initial configuration of
the jdt step:

$%
\begin{tabular}
[c]{|l|l|l|}\hline
$\cdot$ & $\cdot$ & $\cdot$\\\hline
$\cdot$ & $\ast$ & $s$\\\hline
$\cdot$ &  & \\\hline
\end{tabular}
,\
\begin{tabular}
[c]{|l|l|l|}\hline
$\cdot$ & $\cdot$ & $\cdot$\\\hline
$\cdot$ & $\ast$ & $s$\\\hline
$\cdot$ & $t$ & $\cdot$\\\hline
\end{tabular}
,\
\begin{tabular}
[c]{|l|l|l|}\hline
$\cdot$ & $\cdot$ & $\cdot$\\\hline
$\cdot$ & $\ast$ & \\\hline
$\cdot$ & $s$ & \\\hline
\end{tabular}
,\ $and $%
\begin{tabular}
[c]{|l|l|l|}\hline
$\cdot$ & $\cdot$ & $\cdot$\\\hline
$\cdot$ & $\ast$ & $t$\\\hline
$\cdot$ & $s$ & $\cdot$\\\hline
\end{tabular}
.$ Here $s<t$ are integers, and in the first case the cell $\left(
2,3\right)  $ is not part of $Q^{\prime},$ while in the third case the cell
$\left(  3,2\right)  $ is not part of it.

\textbf{1. }In the first case the jdt step is

$%
\begin{tabular}
[c]{|l|l|l|}\hline
$\cdot$ & $\cdot$ & $\cdot$\\\hline
$\cdot$ & $\ast$ & $s$\\\hline
$\cdot$ &  & \\\hline
\end{tabular}
\rightarrow%
\begin{tabular}
[c]{|l|l|l|}\hline
$\cdot$ & $\cdot$ & $\cdot$\\\hline
$\cdot$ & $s$ & $\ast$\\\hline
$\cdot$ &  & \\\hline
\end{tabular}
,$ and since no cell moves vertically, the set $\mathsf{Des}$ does not change.

\textbf{2.} The same applies to the second case, where we have

$%
\begin{tabular}
[c]{|l|l|l|}\hline
$\cdot$ & $\cdot$ & $\cdot$\\\hline
$\cdot$ & $\ast$ & $s$\\\hline
$\cdot$ & $t$ & $\cdot$\\\hline
\end{tabular}
$ $\rightarrow%
\begin{tabular}
[c]{|l|l|l|}\hline
$\cdot$ & $\cdot$ & $\cdot$\\\hline
$\cdot$ & $s$ & $\ast$\\\hline
$\cdot$ & $t$ & $\cdot$\\\hline
\end{tabular}
.$

\textbf{3. }Consider the jdt step

$%
\begin{tabular}
[c]{|l|l|l|}\hline
$\cdot$ & $\cdot$ & $\cdot$\\\hline
$\cdot$ & $\ast$ & \\\hline
$\cdot$ & $s$ & \\\hline
\end{tabular}
\rightarrow%
\begin{tabular}
[c]{|l|l|l|}\hline
$\cdot$ & $\cdot$ & $\cdot$\\\hline
$\cdot$ & $s$ & \\\hline
$\cdot$ & $\ast$ & \\\hline
\end{tabular}
.$ Note that the only two values which we have to worry about are $s$ and
$s-1.$

If $s$ is in $\mathsf{Des}\left(  Q^{\prime}\right)  ,$ then $s$ is in
$\mathsf{Des}\left(  Q^{\prime\prime}\right)  ,$ since $s$ is ascending, and
therefore remains a descent.

Note that the cell $Q^{\prime}\left(  s+1\right)  $ does not belong to the row
of the cell $Q^{\prime}\left(  s\right)  $ -- indeed, it cannot be in this row
and to the left of $Q^{\prime}\left(  s\right)  $ since $s+1>s,$ while
$Q^{\prime}\left(  s\right)  $ is the rightmost cell of that row. Therefore if
$s\notin\mathsf{Des}\left(  Q^{\prime}\right)  ,$ then $s\notin\mathsf{Des}%
\left(  Q^{\prime\prime}\right)  .$

If the cell $Q^{\prime}\left(  s-1\right)  $ is not a descent, then the upward
move of the cell $Q^{\prime}\left(  s\right)  $ cannot make it to be one.

If the cell $Q^{\prime}\left(  s-1\right)  $ is two or more rows higher than
the cell $Q^{\prime}\left(  s\right)  ,$ then the value $s-1$ is a descent for
$Q^{\prime}$ as well as for $Q^{\prime\prime}.$

Finally, the cell $Q^{\prime}\left(  s-1\right)  $ cannot be one row higher
than $Q^{\prime}\left(  s\right)  $ -- indeed, all the values to the left of
$\ast$ in $Q^{\prime}$ are $\leq s-2,$ since for the fragment $%
\begin{tabular}
[c]{|l|l|l|}\hline
$\cdot$ & $\cdot$ & $\cdot$\\\hline
$x$ & $\ast$ & \\\hline
$y$ & $s$ & \\\hline
\end{tabular}
$ of $Q^{\prime}$ we have $x<y<s.$ So $s-1\in\mathsf{Des}\left(  Q^{\prime
}\right)  \ $iff $s-1\in\mathsf{Des}\left(  Q^{\prime\prime}\right)  .$

\textbf{4. }The remaining jdt step to consider is

$%
\begin{tabular}
[c]{|l|l|l|}\hline
$\cdot$ & $\cdot$ & $\cdot$\\\hline
$\cdot$ & $\ast$ & $t$\\\hline
$\cdot$ & $s$ & $\cdot$\\\hline
\end{tabular}
\rightarrow%
\begin{tabular}
[c]{|l|l|l|}\hline
$\cdot$ & $\cdot$ & $\cdot$\\\hline
$\cdot$ & $s$ & $t$\\\hline
$\cdot$ & $\ast$ & $\cdot$\\\hline
\end{tabular}
.$

As in \textbf{3}, if $s$ is in $\mathsf{Des}\left(  Q^{\prime}\right)  ,$ then
$s$ is in $\mathsf{Des}\left(  Q^{\prime\prime}\right)  ,$ since $s$ has ascended.

Again, as in \textbf{3}, the cell $Q^{\prime}\left(  s+1\right)  $ does not
belong to the row of the cell $Q^{\prime}\left(  s\right)  $ -- it cannot be
in this row and to the left of $Q^{\prime}\left(  s\right)  $ since $s+1>s,$
while every entry $x$ in that row to the right of $Q^{\prime}\left(  s\right)
$ satisfy $t<x,$ so $x\geq s+2,$ thus if $s\notin\mathsf{Des}\left(
Q^{\prime}\right)  $ then $s\notin\mathsf{Des}\left(  Q^{\prime\prime}\right)
.$

If the cell $Q^{\prime}\left(  s-1\right)  $ is not a descent, then the upward
move of the cell $Q^{\prime}\left(  s\right)  $ cannot make it to be one.

If the cell $Q^{\prime}\left(  s-1\right)  $ is two or more rows higher than
the cell $Q^{\prime}\left(  s\right)  ,$ then $s-1\in\mathsf{Des}\left(
Q^{\prime}\right)  $ and $s-1\in\mathsf{Des}\left(  Q^{\prime\prime}\right)
.$ Finally, the cell $Q^{\prime}\left(  s-1\right)  $ cannot be one row higher
than $Q^{\prime}\left(  s\right)  $ -- indeed, $t>s,$ while for the fragment $%
\begin{tabular}
[c]{|l|l|l|}\hline
$\cdot$ & $\cdot$ & $\cdot$\\\hline
$x$ & $\ast$ & $t$\\\hline
$y$ & $s$ & $\cdot$\\\hline
\end{tabular}
$ of $Q^{\prime}$ we have $x<y<s.$
\end{proof}

\begin{theorem}
For SYT $Q$ of the skew shape $\lambda/\mu$ we have
\[
\left\vert \mathsf{p}\left(  Q\right)  \right\vert =\mathsf{maj}\left(
\widetilde{Sch}\left(  Q\right)  \right)  .
\]

\end{theorem}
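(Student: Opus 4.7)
The plan is to reduce the skew case to the already-proved straight-shape version (Theorem \ref{mainT}), using the preceding lemma as the main lever. The key observation is that both $|\mathsf{p}(Q)|$ and $\mathsf{maj}(\widetilde{Sch}(Q))$ depend only on $n=|\lambda/\mu|$ and on the descent set of the underlying SYT.

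For $|\mathsf{p}(Q)|$ this follows by extending the volume computation (\ref{42}) from straight to skew shapes. The minimality of the plinth forces $\pi^Q_{Q(k)}$ to be the number of descents of $Q$ strictly less than $k$; it remains only to check that this prescription defines a genuine SsYT for an arbitrary skew shape. The row condition is automatic. For the column condition on two cells of the same column lying in consecutive rows, $Q(k)$ above $Q(k')$ with $k<k'$, the sequence of row indices of $Q(k),Q(k+1),\ldots,Q(k')$ has total increment $1$, so it contains at least one strictly increasing step, i.e.\ a descent in $[k,k'-1]$; hence $\pi^Q_{Q(k)}<\pi^Q_{Q(k')}$. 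The telescoping in (\ref{42}) then yields $|\mathsf{p}(Q)| = nl - \mathsf{maj}(Q)$ for any skew shape, so both $|\mathsf{p}(\cdot)|$ and $\mathsf{maj}(\cdot)$ depend only on $n$ and $\mathsf{Des}(\cdot)$.

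For $\mathsf{Des}(\widetilde{Sch}(Q))$, unfold definition (\ref{54}): $\widetilde{Sch}(Q)=j_{[V:j^P(Q)]}(Sch(j^P(Q)))$. Both $Q\mapsto j^P(Q)$ and $R\mapsto j_{[V:j^P(Q)]}(R)$ are compositions of single jdt moves, so iterating the preceding lemma gives
\[
\mathsf{Des}(Q) = \mathsf{Des}(j^P(Q)), \qquad \mathsf{Des}(\widetilde{Sch}(Q)) = \mathsf{Des}(Sch(j^P(Q))).
\]
Since $j^P(Q)$ is of straight shape, Theorem \ref{mainT} applies and yields $|\mathsf{p}(j^P(Q))| = \mathsf{maj}(Sch(j^P(Q)))$, whereupon combining everything gives
\[
|\mathsf{p}(Q)| = |\mathsf{p}(j^P(Q))| = \mathsf{maj}(Sch(j^P(Q))) = \mathsf{maj}(\widetilde{Sch}(Q)).
\]

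The substantive effort has already been absorbed into the preceding lemma (the four-case descent-preservation) and into the straight-shape Theorem \ref{mainT}; the skew extension is then a short formal chase on top of these. The only subtlety to verify is the shape-independence of the plinth volume formula, handled in the second paragraph, and this is where I expect a referee to look twice.
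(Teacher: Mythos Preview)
Your proof is correct and follows essentially the same route as the paper: reduce to the straight-shape Theorem \ref{mainT} via the descent-preservation lemma, using that both $|\mathsf{p}(\cdot)|$ and $\mathsf{maj}(\cdot)$ are functions of $\mathsf{Des}(\cdot)$ alone. The paper simply cites $(\ref{42})$ for the dependence of $|\mathsf{p}(Q)|$ on $\mathsf{Des}(Q)$ without re-verifying it in the skew case, whereas you supply the (correct) check that the prescription $\pi^Q_{Q(k)}=\#\{i\in\mathsf{Des}(Q):i<k\}$ really is a SsYT of shape $\lambda/\mu$; otherwise the arguments coincide.
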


The proof follows immediately from the Theorem \ref{mainT} and the last Lemma.
Indeed, both the volume of the plinth of a SYT $Q$ and its index
$\mathsf{maj}$ are determined by the set $\mathsf{Des}\left(  Q\right)  $ (see
$\left(  \ref{42}\right)  $). Since $\mathsf{Des}\left(  \ast\right)  $ does
not change after jdt slides, we have $\left\vert \mathsf{p}\left(  Q\right)
\right\vert =\left\vert \mathsf{p}\left(  \mathbf{\ulcorner}Q\right)
\right\vert ,$ while $\mathbf{\ulcorner}\left(  \widetilde{Sch}\left(
Q\right)  \right)  =Sch\left(  \mathbf{\ulcorner}Q\right)  $ by construction,
and we can use the Theorem \ref{mainT} to conclude that%
\[
\left\vert \mathsf{p}\left(  Q\right)  \right\vert =\left\vert \mathsf{p}%
\left(  \mathbf{\ulcorner}Q\right)  \right\vert =\mathsf{maj}\left(
Sch\left(  \mathbf{\ulcorner}Q\right)  \right)  =\mathsf{maj}\left(
\widetilde{Sch}\left(  Q\right)  \right)  .
\]

\vskip.4cm \textbf{Acknowledgements.} The work of S.S. was supported by the
RSF under project 23-11-00150.

We thank Professor S. Fomin for his valuable remarks and suggestions.

\end{document}